\documentclass[12pt,oneside]{amsart}

\usepackage[utf8]{inputenc}
\usepackage{bm, verbatim, graphicx, subcaption, amssymb, color, mathabx, mathtools, enumitem, mathrsfs, amsmath, amsthm, ulem, xspace,tabularx }
\usepackage{xcolor}
\usepackage{subcaption}
\usepackage[pdftex,bookmarks,colorlinks,breaklinks]{hyperref}
\hypersetup{linkcolor=blue,citecolor=red,filecolor=dullmagenta,urlcolor=darkblue}
\usepackage[letterpaper, portrait, lmargin=.9in, rmargin=.9in, tmargin=.8in, bmargin=.9in, headsep=0.25in]{geometry}
\newtheorem{theorem}{Theorem}[section]
\newtheorem{lemma}[theorem]{Lemma}
\newtheorem{question}{Question}[section]
\newtheorem{claim}{Claim}[section]

\newcommand\roundup[1]{\left\lceil#1\right\rceil}
\newcommand\rounddown[1]{\left\lfloor#1\right\rfloor}

\usepackage{etoolbox}

    \makeatletter
    \patchcmd{\@setauthors}{\MakeUppercase}{}{}{}
    \makeatother\usepackage{etoolbox}

    \makeatletter
    \patchcmd{\@setauthors}{\MakeUppercase}{}{}{}
    \makeatother

      \makeatletter
      \def\@setcopyright{}
      \def\serieslogo@{}
      \makeatother

\begin{document}
    \author{Guantao Chen$^1$}
    \author{Alireza Fiujlaali$^1$}
    \author{Anna Johnsen-Yu$^{1,2}$}
    \author{Jessica McDonald$^3$}
    \address{$^1$Department of Mathematics and Statistics, Georgia State University, Atlanta, GA USA 30303-2918. Supported in part  NSF grant DMS-2154331.}
\address{$^{2}$Department of Mathematics,  Vanderbilt University, Nashville, TN 37240-0001 }
    \address{$^3$Department of Mathematics and Statistics,  Auburn University, Auburn, AL 36849. Supported in part by Simons Foundation Grant \#845698 and NSF grant DMS-2452103.}

\title[]{On graphs with girth at least five achieving Steffen's edge coloring bound}

\begin{abstract}
Vizing and Gupta showed that the chromatic index $\chi'(G)$ of a graph $G$ is bounded above by $\Delta(G) + \mu(G)$, where $\Delta(G)$ and $\mu(G)$ denote the maximum degree and the maximum multiplicity of $G$, respectively. Steffen refined this bound, proving that $\chi'(G) \leq \Delta(G) + \left\lceil \mu(G)/\left\lfloor g(G)/2 \right\rfloor \right\rceil$, where $g(G)$ is the girth of the graph $G$. A {\it ring graph} is a graph obtained from a cycle by duplicating some edges. The equality in Steffen's bound is achieved by ring graphs of the form $\mu C_g$, obtained from an odd cycle $C_g$ by duplicating each edge $\mu$ times. We answer two questions posed by Stiebitz et al. regarding the characterization of graphs which achieve Steffen's bound. In particular, we show that if $G$ is a critical graph which achieves Steffen's bound with $g(G)\geq 5$ and $\chi'(G)\geq \Delta+2$, then $G$ must be a ring graph of odd girth.
\end{abstract}

   \subjclass[2010]{05C15}
   \keywords{Edge Coloring, Chromatic Index, Ring Graphs, Graph Classification}

   \date{\today}

   \maketitle
\section{Introduction}

We generally follow the notation and definitions of West \cite{MR1367739}.  Graphs under consideration may have multiple edges, but do not have loops.
A {\it $k$-edge-coloring} of a graph $G$ is a labeling $f: E(G)\to [k]:=\{1, \dots, k\}$, such that the edges incident to any single vertex are labeled with distinct colors. The {\it chromatic index} of a graph $G$, denoted by $\chi'(G)$, is the least $k$ such that $G$ has a $k$-edge-coloring.

Holyer~\cite{MR635430} proved that
determining the chromatic index of an arbitrary graph is NP-complete, even when restricted to cubic simple graphs.
Since all edges incident with a single vertex must be labeled with a different color, $\chi'(G)\geq\Delta(G)$, where $\Delta(G)$ denotes the maximum degree of $G$. In 1949,
Shannon~\cite{MR30203}  showed that $\chi'(G)\leq (3/2)\Delta(G)$. In the 1960s, Vizing~\cite{MR240000} and Gupta~\cite{Gupta} independently established another upper bound involving the {\it multiplicity} of a graph $G$, denoted by $\mu(G)$; that is, the maximum number of parallel edges sharing two common vertices. In particular, they proved that $\chi'(G)\leq \Delta(G) + \mu(G)$. This is commonly referred to as ``Vizing's theorem'' since Gupta's proof was discovered after Vizing's result had become widely known. This result is particularly appealing when applied to  simple graphs since it reduces the possible values of $\chi'(G)$ to two options: $\Delta(G)$ and $\Delta(G)+1$.

The {\it girth} of $G$, denoted by $g(G)$,  is the smallest integer $g \ge 3$ such that $G$ has a cycle length $g$ if such a cycle exists and $g(G) = \infty$ otherwise. By taking into account the girth of a graph, Steffen refined Vizing's theorem as follows.
\begin{theorem}[Steffen~\cite{MR1754344}]\label{Thm-Steffen}
For any graph $G$,
    $ \label{eqn:Steffen}
        \chi'(G)\leq \Delta(G)+\roundup{\dfrac{\mu(G)}{\rounddown{g(G)/2}}}$.

\end{theorem}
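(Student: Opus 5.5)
We argue by induction on $|E(G)|$, using the standard Vizing fan/Kempe chain technique adapted to multigraphs; the argument follows the classical proof of the Vizing--Gupta bound $\chi'(G)\le \Delta+\mu$, with the multiplicity term replaced by a bound that exploits the girth. Set $k=\Delta(G)+\roundup{\mu(G)/\rounddown{g(G)/2}}$, write $g=g(G)$, $\mu=\mu(G)$, and assume $g<\infty$. If $g=\infty$, the graph is a simple forest; forests are bipartite, so K\"onig's theorem gives $\chi'(G)=\Delta$. We may assume $G$ is $k$-critical in the sense that $G-e$ is $k$-edge-colorable for an edge $e=xy_0$ but $G$ is not. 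Fix a $k$-coloring $\varphi$ of $G-e$ and let $\bar\varphi(v)$ denote the set of colors missing at $v$. Every vertex misses at least $k-\Delta\ge 1$ colors, and the endpoints of $e$ miss at least $k-\Delta+1$.

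The key step is to build a maximal multifan at $x$: a sequence of edges $e_0=xy_0, e_1=xy_1,\dots,e_p=xy_p$ such that each $\varphi(e_i)$ is missing at some earlier $y_j$, $j<i$. By the standard Kempe-chain argument (swapping the $\alpha/\beta$ path through $x$ for $\alpha\in\bar\varphi(x)$, $\beta\in\bar\varphi(y_j)$), a non-colorable configuration forces the sets $\bar\varphi(x),\bar\varphi(y_0),\dots,\bar\varphi(y_p)$ to be pairwise disjoint (the ``elementary'' property). Counting gives
\[
|\bar\varphi(x)|+\sum_{z\in\{y_0,\dots,y_p\}}|\bar\varphi(z)| \le k,
\]
where the sum runs over distinct fan neighbours $z$. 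The fan edges all lie in $E(x,\{y_0,\dots,y_p\})$ and, by maximality, every color missing at any $y_j$ appears on a fan edge. In Vizing's argument, one simply uses the fact that there are at most $\mu$ edges to each $y_j$. Here, we instead want a better estimate: we will refine the fan to a \emph{path-like} (Kierstead-type) sequence, where each new vertex is reached via a color missing at the previous vertex. This produces a trail $x$-$y_0$ and a structure in which the vertices $y_j$, together with $x$, would close cycles through $x$ whenever two fan neighbours are joined by a Kempe path of length less than $g$.

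The main obstacle is transferring the girth into the counting. Our plan is to use Kierstead paths instead of fans: consider a path $P=v_0v_1\cdots v_t$ with $v_0v_1=e$ uncolored and each $\varphi(v_iv_{i+1})$ missing at some earlier $v_j$. It is known that, in a non-colorable situation, such a path is elementary when degrees are suitably bounded. Taking $t$ up to $\rounddown{g/2}$, the vertices in a ball of radius $<g/2$ around $x$ induce a forest of multi-edges, so the missing-color sets along distinct branches can be charged to distinct vertices. Then, summing over the $\rounddown{g/2}$ levels, each level contributes at least $k-\Delta$ missing colors at each new vertex, while the total number of edges used is at most $\mu$ per adjacent pair. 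Comparing these yields $\rounddown{g/2}(k-\Delta)\le \mu-1$ or a similar inequality, which contradicts the choice of $k$. Making this counting precise, in particular proving elementarity of the extended Kierstead structure and ensuring that no short cycle allows a Kempe swap to interfere, is the step we expect to be hardest. We would try first to follow the case analysis of Steffen's original argument via Tashkinov-tree style elementarity in the tree-like neighbourhood guaranteed by the girth.
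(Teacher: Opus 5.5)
The paper gives no proof of this statement. It is Steffen's theorem, quoted with a citation and used as a black box, so your proposal has to stand on its own. As written it has a genuine gap, located exactly where Steffen's bound differs from Vizing's.

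Elementarity of fans, Kierstead paths or Tashkinov trees only bounds the \emph{total} number of missing colors by $k$. The multiplicity enters only through a closure step that forces missing colors onto edges between fixed vertices, and for a maximal fan at $x$ this gives just $k-\Delta<\mu$, i.e.\ Vizing's bound. Your final count has the same defect: ``at most $\mu$ per adjacent pair'', summed over $\lfloor g/2\rfloor$ levels, puts $\lfloor g/2\rfloor\mu$ on the right-hand side, not $\mu-1$. To reach $\lfloor g/2\rfloor(k-\Delta)\le\mu-1$ you need a lemma saying that the colors missing at $\lfloor g/2\rfloor$ different vertices all sit on one and the same multi-edge, and nothing in the sketch produces it.

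Kierstead paths inside the tree-like ball cannot produce it either. For $1\le i\le g-2$:
\begin{enumerate}
\item a color missing at $y_{i-1}$ is present at $y_i$, by elementarity;
\item it cannot lead back to $y_{i-1}$;
\item it cannot lead to some $y_j$ with $j\le i-2$ without closing a cycle of length at most $i+1<g$.
\end{enumerate}
So the path always extends to $g$ vertices without anything being forced onto an earlier multi-edge. Its elementarity then gives only $g(k-\Delta)+2\le k$. Run with $k=\chi'(G)-1$, this proves merely $\chi'(G)\le\Delta+1+(\Delta-2)/(g-1)$. That bound is in terms of $\Delta$ rather than $\mu$, and it is far weaker than Steffen's when $\Delta\gg\mu$. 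Thus the step you single out as hardest, elementarity, is the known part (Kierstead's and Tashkinov's theorems). What is absent is the counting that brings $\mu$ in.

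The extremal graphs show what that counting has to look like. Take $\mu C_g$ with $g=2t+1$ and $\mu=tr+1$, and any coloring of $\mu C_g-e$ with $k=\Delta+r$ colors. Every color class is then a near-perfect matching of the cycle. The multi-edge $c_ic_{i+1}$ carries precisely the colors missing at the $t$ alternate vertices $c_{i-1},c_{i-3},\dots,c_{i-2t+1}$.

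So the factor $\lfloor g/2\rfloor$ comes from parity around an odd cycle of length at least $g$. This matches a general fact: a closed elementary vertex set (one that no edge of a color missing in it leaves) must, when $k>\Delta$, have odd order and induce a non-bipartite subgraph. It does not come from charging missing colors to disjoint branches of a tree-like neighbourhood. An argument of this parity kind, forcing the colors missing at $\lfloor g/2\rfloor$ vertices onto a single multi-edge, is what your plan still needs.

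A minor point: for $g=\infty$ only the underlying simple graph is a forest. The multigraph is still bipartite, so K\"onig's theorem applies as you say.
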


Let $st(\Delta, \mu, g) =\max\{\chi'(G) :  \mbox{  $G$ is a graph with } \Delta(G) = \Delta,\  \mu(G) = \mu,\  g(G) = g\}$.
Steffen's theorem shows that $st(\Delta, \mu, g) \le \Delta +\roundup{\mu/\rounddown{g/2}}$.
Stiebitz, Scheide, Toft, and Favrholdt ~\cite[ch. 9, pp. 249-250]{MR2975974}  raised the following two questions. Here, a {\it ring graph} is obtained from a cycle by duplicating some edges.

\begin{question}
\label{GuptaConj1}
    For which triples $(\Delta,\mu,g)$ is   $st(\Delta, \mu, g)  = \Delta+\roundup{\mu/\rounddown{g/2}}$?
\end{question}
\begin{question}
\label{GuptaConj2}
    Let $g$ be an odd integer and let $\mu$ be an integer with $\mu\geq g\geq 3$. Is there a graph $G$ with $g(G)=g$ and $\mu(G)=\mu$ achieving Steffen's bound, but not containing a ring graph of order $g$ with the same chromatic index as $G$?
\end{question}

Stiebitz, Scheide, Toft, and Favrholdt~\cite{MR2975974} observed that for every odd integer \(g \ge 3\), the multigraph
\(G = \mu C_g\),
obtained from the odd cycle \(C_g\) by duplicating each edge \(\mu\) times, satisfies
\[
\chi'(G) = \Delta(G) + \Bigl\lceil \frac{\mu}{\lfloor g/2\rfloor}\Bigr\rceil.
\]
Hence, for any positive integer \(\mu\) and odd \(g \ge 3\), the triple \((2\mu, \mu, g)\) is a solution to Question~\ref{GuptaConj1}.
When \(g \ge 4\) is even, the disjoint union of \(\mu C_g\) and \(\mu C_{g+1}\) also attains this bound; in that case the triple \((2\mu,\mu,g)\) again satisfies Question~\ref{GuptaConj1}.
Thus, one obtains
\[
\mathrm{st}(2\mu,\mu,g) = 2\mu + \Bigl\lceil \frac{\mu}{\lfloor g/2\rfloor}\Bigr\rceil
\]
for all integers \(\mu\ge1\) and \(g\ge3\).

Our main result is Theorem \ref{thm-main} below, which provides a negative answer to Question~\ref{GuptaConj2} for $g\geq 5$ and $\mu(G) \geq \lfloor g/2 \rfloor+1$, which is slightly more broad than the original question. Note that for a graph $G$ achieving Steffen's bound, this restriction on $\mu(G)$ is equivalent to requiring $\chi'(G)\geq \Delta(G)+2$.

\begin{theorem}\label{thm-main}
Let $g\ge 5$ be an integer and let $G$ be a graph with girth $g(G) \geq g$ and  $\mu(G) \geq \lfloor g/2 \rfloor+1$. If $\chi'(G) = \Delta(G) + \lceil \mu(G)/\lfloor g/2 \rfloor\rceil$, then $G$ contains a ring graph with the same chromatic index as $G$.
\end{theorem}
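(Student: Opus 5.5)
We plan to argue via critical graphs and Tashkinov-tree / Kierstead-path machinery. Set $k=\lfloor g/2\rfloor\ge 2$, $\mu=\mu(G)\ge k+1$, and $t=\lceil \mu/k\rceil\ge 2$, so $\chi'(G)=\Delta+t\ge\Delta+2$. First pass to a critical subgraph $H\subseteq G$ with $\chi'(H)=\chi'(G)$; every edge $e$ of $H$ satisfies $\chi'(H-e)<\chi'(H)$. Since $\Delta(H)\le\Delta$, $\mu(H)\le\mu$ and $g(H)\ge g$, Theorem~\ref{Thm-Steffen} applied to $H$ forces $\Delta(H)=\Delta$ and $\lceil\mu(H)/\lfloor g(H)/2\rfloor\rceil=t$. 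In particular $\mu(H)\ge k+1$ and $H$ is not simple. It therefore suffices to prove the following critical statement: a critical graph $H$ with $g(H)\ge 5$ attaining Steffen's bound with $\chi'(H)\ge\Delta(H)+2$ is itself a ring graph of odd length. The theorem then follows, since $H$ is a ring subgraph of $G$ with the same chromatic index.

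For the critical statement, fix an edge $e=xy$ of maximum multiplicity and a $(\Delta+t-1)$-coloring $\varphi$ of $H-e$. Then $|\overline\varphi(x)|,|\overline\varphi(y)|\ge t-1+$ and, by the standard Vizing-fan argument, the missing sets at the ends of $e$ are disjoint. Next we grow a Kierstead path (or a Tashkinov tree) $T$ starting from $e$, adding edges colored by colors missing at earlier vertices. The key known fact is that $V(T)$ is elementary: missing color sets are pairwise disjoint, and $T$ is closed in $H$. Counting missing colors gives $\sum_{v\in V(T)}|\overline\varphi(v)|\le \Delta+t-1$. Each vertex misses at least $t-1+(\Delta-d(v))$ colors, and the ends of $e$ miss one more. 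Hence $|V(T)|(t-1)+2\le\Delta+t-1$ roughly, so $T$ has few vertices relative to $\Delta/(t-1)$.

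The girth condition enters here. Because $g\ge 5$, any two vertices of $T$ share edges only along a tree-like or cycle structure. Since $T$ is closed and elementary, the subgraph $H[V(T)]$ contains all edges colored by missing colors. We will show, using $\mu(uv)\le\mu$ for each pair, that the degree of each vertex in $H[V(T)]$ is at most $2\mu$ when $V(T)$ induces no short cycle; any chord would create a cycle shorter than $g$. Combining this with elementariness, which gives $|E(H[V(T)])|$ large because $V(T)$ is closed and odd, forces $|V(T)|=2m+1$ with $H[V(T)]$ a cycle of length $2m+1\ge g$ whose consecutive multiplicities sum to at least $(\Delta+t-1)m+1$. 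Then $\mu$, girth and Steffen's count give equality, so $H[V(T)]$ is a ring graph with $\chi'=\Delta+t$. By criticality $H=H[V(T)]$.

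The main obstacle is the step of growing $T$ so that it captures a whole odd cycle rather than a tree. Tree-shaped $T$ yields only the density bound $|E|\le \mu(|V|-1)$, which contradicts $|E|>(\Delta+t-1)\lfloor|V|/2\rfloor$ once $t\ge2$. We expect this contradiction to eliminate the acyclic case, with $g\ge5$ used to exclude triangles where two multi-edges could share the deficiency. We would first try to make this precise by applying the inequality $2|E(H[V(T)])|\ge(\Delta+t-1)(|V(T)|-1)+2$ together with $|E(H[V(T)])|\le\mu\cdot|E(\text{underlying simple graph})|$. The underlying simple graph on at most roughly $\Delta/(t-1)$ vertices with girth $\ge5$ must then be exactly a cycle.
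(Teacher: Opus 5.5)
There is a genuine gap. Your reduction to a critical subgraph $H$ is the same as the paper's, but the central step, showing that $H$ is a ring graph, is never actually carried out, and the steps you sketch for it fail.

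\textbf{The density inequality does not follow from closedness.} A maximal Tashkinov tree gives a set $V(T)$ that is elementary and \emph{closed}. Closedness only forbids edges of $\partial(V(T))$ whose colors are missing in $V(T)$. A color present at every vertex of $V(T)$ may still leave $V(T)$ along $3,5,\dots$ edges. So the inequality $2|E(H[V(T)])|\ge(\Delta+t-1)(|V(T)|-1)+2$ that you rely on does not follow. You would need $V(T)$ to be \emph{strongly} closed. For a critical $H$ with $\chi'\ge\Delta+2$, such a set is necessarily all of $V(H)$, and producing it is exactly the Goldberg--Seymour theorem (Theorem~\ref{Thm-CJZ}), which the paper simply quotes. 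So the Tashkinov machinery can at best recover the paper's starting point, namely $\chi'(H)=\Gamma(H)$ and the identity of Lemma~\ref{critical}. It does not touch the real difficulty.

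\textbf{The structural claims are false as stated.} This is where all the work lies.
\begin{itemize}
\item Girth at least $5$ does not bound degrees in $\underline{H}$.
\item $H[V(T)]$ need not have a Hamiltonian cycle, so ``chords'' have nothing to refer to.
\item Even if it did, a chord of a cycle of length $m$ only yields a cycle of length about $m/2+1$, which can be $\ge g$.
\end{itemize}
The final inference also fails: your order bound $|V|(t-1)+2\le\Delta+t-1$ together with $(\Delta+t-1)(|V|-1)+2\le 2\mu|E(\underline{H})|$ does not force a cycle. Take $g=5$, $\mu=4$, $t=2$, $\Delta=10$, and let $\underline{H}$ be the Petersen graph minus a vertex. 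This graph has odd order $9$, girth $5$ and $12$ edges. Then $11\le 11$ and $90\le 96$, so both inequalities hold.

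What actually kills this example is per-vertex degree information. By Lemma~\ref{critical}, $\delta(H)\ge n\mu/g+1=8.2$. But the three vertices of $\underline{H}$-degree $2$ have degree at most $2\mu=8$. Your weakened count $|V(T)|(t-1)+2\le\Delta+t-1$ discards exactly this information: it relates the order only to $\Delta$, which is not tied to $\mu$ or $n$.

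The paper's argument is built on the bound $\delta(H)\ge n\mu/g+1$ and on using girth quantitatively:
\begin{itemize}
\item vertices off a shortest cycle have at most one neighbor on it (Lemma~\ref{lem-shortcycle});
\item averaging degrees over a cycle partition $V_0,C_1,\dots,C_\ell$ then gives $\delta(\underline{H})=\ell+1$ and $n_0\le g-1$;
\item this leads to bounds on $t$-fans;
\item a case analysis on $g$ and $\ell$, with a separate treatment of $g=5$, eliminates everything except $V_0=\emptyset$, $\ell=1$.
\end{itemize}
Nothing in your proposal replaces this, so the proof is incomplete at its core step.
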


Our proof technique for Theorem \ref{thm-main} unfortunately does not apply to graphs $G$ with $g(G) =4$, so some new ideas would be needed to attack this case.  When $g(G)=3$ however, there is a positive answer to Question~\ref{GuptaConj2}. To see this, let $\mu K_n$ be obtained from the complete graph on $n$ vertices by duplicating each edge $\mu$ times. If $n\geq3$ is odd, then $\chi'(\mu K_n) = \mu n= \Delta(\mu K_n) + \mu(\mu K_n)$,  and $((n-1)\mu, \mu, 3)$ is a triple satisfying Question~\ref{GuptaConj1}. Since $\mu K_n$ does not contain a ring graph with the same chromatic index when $n>3$, this answers Question~\ref{GuptaConj2}  positively.

A graph is  (edge-coloring) \textit{critical} if $\chi'(H) < \chi'(G)$ for any proper subgraph $H \subset G$. Theorem~\ref{thm-main} implies that if $G$ is a critical graph with $g(G)\geq 5$ and $\mu(G) \ge \lfloor g(G)/2 \rfloor +1$ which achieves Steffen's bound, then $G$ is a ring graph, and moreover $g(G)$ must be odd. The oddness comes from the fact that a ring graph with an even number of vertices is a bipartite graph (and so its chromatic index equals its maximum degree).

The remainder of the paper is organized as follows. In Section \ref{prelim}, we introduce more notation and prove some preliminary lemmas. The proof of Theorem \ref{thm-main} for $g\geq 6$ is given in Section \ref{mainpf}.
The proof of Theorem \ref{thm-main} for $g=5$ is given separately, in Section \ref{oddipf}.

Throughout the paper, we use the following notation. If the graph \(G\) is clear from the context, we omit \(G\) from graph parameters; in particular, we write \(\chi'\), \(\Delta\), and \(\mu\) for \(\chi'(G)\), \(\Delta(G)\), and \(\mu(G)\), respectively.
We denote that two vertices \(u\) and \(v\) are {\it adjacent} in \(G\) by \(u \sim v\).
We let \(|G| = |V(G)|\).
For subgraphs \(F, H \subseteq G\), let
\(N_H(F) = N(V(F)) \cap V(H)\) be the set of neighbors of \(V(F)\) in \(H\),
and let \(d_H(v)\) denote the degree of \(v\) in \(H\).

\section{Preliminaries}\label{prelim}

The \textit{density} of a graph $G$ is defined  as follows:
\[\Gamma(G) = \max\left\{\roundup{\frac{2|E(H)|}{|H|-1}}\ : \ H\subseteq G, |H| \text{ odd}\right\}.\] Since each matching in a subgraph $H$ with an odd number of vertices has at most $(|H|-1)/2$ edges, it follows that  $\chi'(G)\geq \Gamma(G)$ for all graphs $G$. Hence, $\Gamma(G)$ provides another lower bound for $\chi'(G)$ besides the maximum degree $\Delta(G)$.
The well-known Goldberg-Seymour Conjecture states that if $\chi'(G)\ge \Delta(G) +2$ then $\chi'(G) =\Gamma(G)$, which was confirmed by Chen, Hao, Jing, Yu and Zang in three different versions of papers~\cite{chen2025proof,ChenHaoYuZang2024ShortGS,Jing2024EdgeColoring}. We refer to  \cite{MR2975974} for literature on this conjecture.

\begin{theorem}[Chen, Hao, Jing, Yu, Zang]\label{Thm-CJZ}
For any graph $G$, if $\chi'(G) \ge \Delta(G) +2$ then $\chi'(G) = \Gamma(G)$. \end{theorem}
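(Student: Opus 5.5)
This is the Goldberg--Seymour theorem, and we only indicate the standard strategy that the cited proofs follow. Argue by contradiction. Suppose $\chi'(G)\ge \Delta(G)+2$ but $\chi'(G)>\Gamma(G)$. Since both parameters are monotone under taking subgraphs and the hypothesis $\chi'\ge\Delta+2$ is preserved, we may pass to a critical subgraph. So assume $G$ is $(k+1)$-critical with $k=\chi'(G)-1\ge \Delta(G)+1$.

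Fix an edge $e=xy$ and a $k$-edge-coloring $\varphi$ of $G-e$. For a vertex $v$, let $\overline{\varphi}(v)$ denote the set of colors missing at $v$. Because $k\ge\Delta+1$, every vertex misses at least one color, and $x,y$ each miss at least two. Call a vertex set $X\supseteq\{x,y\}$ \emph{elementary} if the sets $\overline{\varphi}(v)$, $v\in X$, are pairwise disjoint.

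The target is to show that $V(G)$ itself is elementary for a suitable choice of $e$ and $\varphi$. Granting this, a parity count on missing colors gives $|V(G)|$ odd, and
\[
2|E(G)| = k|V(G)| - \sum_v|\overline{\varphi}(v)| + 2 \ge k|V(G)|-k+2.
\]
The inequality holds because elementarity forces the missing sets to be disjoint subsets of $[k]$, so $\sum_v|\overline{\varphi}(v)|\le k$. Hence $2|E(G)|/(|V(G)|-1) > k$, so $\Gamma(G)\ge k+1=\chi'(G)$, contradicting $\chi'(G)>\Gamma(G)$.

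To prove elementarity, we grow a structure from $e$. Start with a Tashkinov tree $T$: a sequence $x,e_1,y,e_2,\ldots$ in which each new edge has a color missing at an earlier vertex. Tashkinov's theorem says $V(T)$ is elementary whenever $k\ge\Delta+1$. If $V(T)\neq V(G)$ for every maximal tree, then the closure of $T$ is a proper "closed" elementary set, and every color is either closed on it or has a unique exit edge (a "defective" color). We then extend $T$ through connecting edges into an \emph{extended Tashkinov tree}. At each stage we recolor along Kempe chains ($\alpha/\beta$-paths) so that the set stays elementary, and we adjoin further vertices through defective colors. We carry the invariant that the current set is elementary and strongly closed. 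A maximal such extension must then be all of $V(G)$, since otherwise a Kempe interchange produces a coloring whose tree is strictly larger, contradicting maximality.

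The main obstacle is the inductive step asserting that the extended tree stays elementary after each extension. This requires controlling how Kempe changes interact with previously built parts: one must show that a color missing twice in the extended set would allow a sequence of interchanges, each keeping an earlier segment's closure intact, that produces a larger structure or an extendable coloring of $G$. In the cited papers, this is handled by ordering the extensions, maintaining "stable" colorings relative to each segment, and doing a careful case analysis of paths with defective colors. We would follow that approach, proving the invariant by induction on the number of extension segments, choosing a counterexample that minimizes first the number of segments and then the size of the last one.
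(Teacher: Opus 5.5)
The paper does not prove this statement; it cites the Goldberg--Seymour theorem from the cited works, so your proposal makes the same move. The two steps you actually carry out are correct. Passing to a critical subgraph $H$ with $\chi'(H)=\chi'(G)$ preserves the hypothesis (since $\Delta(H)\le\Delta(G)$) and transfers the conclusion back (since $\Gamma(H)\le\Gamma(G)\le\chi'(G)$). And if $V(G)$ is elementary, parity of the color classes forces $n$ odd and $2|E(G)|\ge k(n-1)+2$, so $\Gamma(G)\ge k+1$. But as an argument, the proposal has its gap exactly where the theorem lives. The claim that extended Tashkinov trees stay elementary through the necessary Kempe changes is the entire content of the cited papers, and you only name it (``we would follow that approach''). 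That is adequate for how the paper uses the result, but it should be presented as a citation, not a proof.

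Your sketch of the middle part is also misstated in ways that would matter if you tried to fill it in. For a closed elementary set $X\supseteq\{x,y\}$, $|X|$ is odd, so each color $\beta\notin\overline{\varphi}(X)$ has an odd number of boundary edges. Such a $\beta$ is \emph{defective} when that number is at least $3$, not when the exit edge is unique. A set in which every color is closed or has a unique exit edge is exactly a strongly closed set. That is the stopping point of the construction, not its invariant. The invariant is that each completed segment is elementary and closed, and one of its defective colors supplies the next connecting edge.

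Likewise, termination needs no Kempe-maximality argument and no claim that the set equals $V(G)$. A maximal extended tree has no defective color, hence is strongly closed. For an elementary strongly closed $X\supseteq\{x,y\}$, every color class of $\varphi$ has exactly $(|X|-1)/2$ edges in $G[X]-e$. So $|E(G[X])|=k(|X|-1)/2+1$ and $\Gamma(G)\ge k+1$ immediately; criticality then gives $X=V(G)$ as a by-product.
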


Theorem~\ref{Thm-CJZ} implies the following two results, each of which provides a lower bound on minimum degree.

\begin{lemma} \label{critical}
    Let $G$ be a critical graph on $n$ vertices. If  $\chi'(G) \geq \Delta + 2$,
    then $n$ is odd and for every vertex $v\in V(G)$,
    $$d_{G}(v) = \sum_{w\ne v} (\chi' -1 -d_G(w)) +2  \ge (n-1)(\chi' -1 -\Delta) +2.$$
Moreover, if $n \ge g$ and $\chi'(G) = \Delta + \lceil \mu/\lfloor g/2 \rfloor\rceil \ge \Delta +2$ for some positive integer $g \ge 5$, then
    \( \delta(G) \ge  n\mu/g +1\), and  equality holds only if $\mu =g$.
    \end{lemma}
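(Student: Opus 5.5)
The first part comes from combining Theorem~\ref{Thm-CJZ} with criticality to get an exact edge count. The second part is then a short arithmetic estimate in the quantity $c:=\lceil \mu/\lfloor g/2\rfloor\rceil$.

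\emph{Step 1: $G$ itself is the dense subgraph, so $n$ is odd.} Let $m=|E(G)|$. Since $\chi'\ge \Delta+2$, Theorem~\ref{Thm-CJZ} gives $\chi'=\Gamma(G)$. So there is an odd-order $H\subseteq G$ with $\lceil 2|E(H)|/(|H|-1)\rceil=\chi'$. Because $\Gamma(H)\le\chi'(H)$, we get $\chi'(H)\ge \chi'$. Criticality then forces $H=G$. Hence $n$ is odd and $2m/(n-1)>\chi'-1$.

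\emph{Step 2: the exact value of $2m$.} For any edge $e$, criticality gives $\chi'(G-e)\le \chi'-1$. The matching-counting bound behind $\Gamma\le\chi'$, applied to $G-e$ on all $n$ vertices, gives $2(m-1)\le(\chi'-1)(n-1)$. So
\[(\chi'-1)(n-1)<2m\le (\chi'-1)(n-1)+2.\]
Both $2m$ and $(\chi'-1)(n-1)$ are even, since $n-1$ is even. Therefore $2m=(\chi'-1)(n-1)+2$.

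\emph{Step 3: the degree identity.} Write $2m=d_G(v)+\sum_{w\ne v}d_G(w)$ and combine it with Step 2. This gives
\[d_G(v)=\sum_{w\ne v}(\chi'-1-d_G(w))+2.\]
Each summand is at least $\chi'-1-\Delta$, which yields the stated lower bound $(n-1)(\chi'-1-\Delta)+2$.

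\emph{Step 4: the bound in terms of $\mu$.} Set $k=\lfloor g/2\rfloor$, so that $\chi'-1-\Delta=c-1$ with $c\ge 2$. Then $\mu\le ck$ and $2k\le g$, so $\mu/g\le ck/g\le c/2$. It therefore suffices to show $(n-1)(c-1)+2\ge nc/2+1$. This difference equals
\[(n-1)(c-1)+1-\tfrac{nc}{2}=(c-2)\bigl(\tfrac n2-1\bigr)\ge 0.\]
Hence $\delta(G)\ge nc/2+1\ge n\mu/g+1$.

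\emph{Step 5: the equality case.} Suppose $\delta(G)=n\mu/g+1$. Then every inequality in Step 4 is tight. Since $n\ge g\ge5$, tightness of $(c-2)(n/2-1)\ge0$ forces $c=2$. Tightness of $\mu\le ck$ and $ck/g\le c/2$ forces $\mu=2k$ and $g=2k$. Together these give $\mu=g$.

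The only point needing care is Step 2. One must use that the same counting bound applies to $G-e$ on the full vertex set, and that the parity of $n-1$ makes the upper bound tight. After that, the rest is routine algebra.
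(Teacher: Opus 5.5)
Your proof is correct. For the first part you and the paper rely on the same identity, $2|E(G)|=(\chi'-1)(n-1)+2$, but reach it differently. The paper uses the ceiling inequality $\lceil (|E(G)|-1)/\tfrac{n-1}{2}\rceil\le \chi'(G)-1$ to show that $(|E(G)|-1)/\tfrac{n-1}{2}$ is an integer. Hence every $(\chi'-1)$-coloring of $G-e$ consists of near-perfect matchings, and the degree formula comes from counting, for each $w\ne v$, the matchings that miss $w$ (each of which must cover $v$). You obtain the same equality by squeezing $2|E(G)|$ between the two bounds and using the parity of $n-1$. The degree identity then follows directly from the handshake lemma, which is shorter and needs no matching structure. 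For the second part the routes genuinely differ. The paper weakens $\lceil \mu/\lfloor g/2\rfloor\rceil$ to $\lceil 2\mu/g\rceil$ and runs a three-way case analysis ($\mu<g$, $\mu=g$, $\mu>g$). You keep $c=\lceil \mu/\lfloor g/2\rfloor\rceil$, use $\mu/g\le c/2$, and factor the difference as $(c-2)(n/2-1)$. Your version avoids the cases and shows that the hypothesis $n\ge g$ is not needed for the inequality itself; the equality analysis only needs $n\ge 3$. It also yields slightly more in the equality case, namely $c=2$, $g$ even and $\mu=g$.
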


\begin{proof} By definition, $\Gamma(G) \le \chi'(G)$. Applying  Theorem~\ref{Thm-CJZ}, we find that $\Gamma(G)=\chi'(G)$ as $\chi'(G) \ge \Delta +2$. Since $G$ is critical, for every proper subgraph $H$ of $G$ with an odd number of vertices, we have that $\roundup{2|E(H)|/(|H|-1)} \le \Gamma(H) \le \chi'(H) < \chi'(G) = \Gamma(G)$.  Hence, $G$ itself is the only subgraph $H$ of $G$ with an odd number of vertices such that $\roundup{2|E(H)|/(|H|-1)} = \Gamma(G)$. Consequently, $n$ is odd, and for any edge $e\in E(G)$,
\( \chi'(G-e) =\chi'(G) -1\). Hence,
\[
 \roundup{\frac{|E(G-e)|}{(n-1)/2}} \le \chi'(G-e)= \chi'(G) -1 = \roundup{\frac{|E(G)|}{(n-1)/2}} -1.
\]
Since $\frac{|E(G-e)|}{\rounddown{(n-1)/2}} = \frac{|E(G)|-1}{(n-1)/2} \ge \roundup{\frac{|E(G)|}{(n-1)/2}} -1$, it follows that  $\frac{|E(G-e)|}{\rounddown{(n-1)/2}}$ is an integer and is equal to $\chi'(G-e)$. Hence, for an edge-coloring of $G-e$ with $\chi'(G-e)$ colors, each color class contains exactly $(n-1)/2$ edges. Therefore, \(E(G - e)\) can be partitioned into \(\chi'(G) - 1\) near-perfect matchings, each of which misses exactly one vertex of \(G\).

Let $v$ be the  vertex specified in Lemma~\ref{critical}, and  let $e$ be an edge not incident with $v$.  For each vertex $w\ne v$, if $w$ is not incident to $e$, there are exactly $\chi'(G) -1 -d_G(w)$ of these near-perfect matchings not containing the vertex $w$;  if $w$ is incident to $e$, there are exactly $\chi'(G) -1 -(d_G(w) -1)$ of these matchings  not containing the vertex $w$. Note that every one of these matchings contains $v$. Hence, $d_G(v)  = \sum_{w\ne v} (\chi'(G) -1 -d_G(w)) +2$. As $d_G(w) \le \Delta(G)$, we have the following.
\[
\delta(G) \ge (n-1)(\chi'(G) -1 -\Delta(G)) +2.
\]

Now suppose $\chi'(G)=\Delta + \roundup{\mu/\rounddown{g/2}}\geq \Delta + 2$ for some positive integer $g\geq 5$. Then by substituting $\Delta + \roundup{\mu/\rounddown{g/2}}$ for $\chi'(G)$, we obtain the following inequality.
\[
\delta(G) \ge (n-1)\left(\roundup{\frac{\mu}{\rounddown{g/2}}}-1\right) +2\ge (n-1)\left(\roundup{{\frac{2\mu}g}}-1\right) +2.
\]

Note that
\[
(n-1)\left(\roundup{\frac{\mu}{\rounddown{g/2}}}-1\right) +2 -  \left(\frac {n\mu}g +1\right)
\ge (n-1)\left(\roundup{\frac{2\mu}g}-\frac {\mu}g -1\right) -\frac {\mu}g +1.
\]
If $\mu <   g$, then $\frac {\mu +1}g \le 1$, and hence
$\roundup{\frac{2\mu}g} \ge 2 \ge 1+ \frac {\mu +1}g$. Thus,
\[
(n-1)\left(\roundup{\frac{2\mu}g}-\frac {\mu}g -1\right) -\frac {\mu}g +1 \ge \frac 1g ((n-1) -\mu) +1 \ge \frac 1g((n-1) -g) +1> 0.
\]

If $\mu =g$, then
$
(n-1)\left(\roundup{\frac{2\mu}g}-\frac {\mu}g -1\right) -\frac {\mu}g +1= 0$, as desired.

We now turn to the case $\mu >g$. In this case, $\roundup{2\mu/g} \ge 3$. If $\mu/g \le 1.5$,  then $4\mu/(3g) \le 2$, and hence $\roundup{2\mu/g} =3\ge 1 +4\mu/(3g)$.  If $\mu/g > 1.5$, then $(2/3) (\mu/g) \ge 1$, which implies that $\mu/g > 1 + \mu/(3g)$, and so $\roundup{2\mu/g} \ge \mu/g +\mu/g \ge 1+\mu/3g + \mu/g = 1 +4\mu/(3g)$.
Hence,
\[
(n-1)\left(\roundup{\frac{2\mu}g}-\frac {\mu}g -1\right) -\frac {\mu}g +1\ge \frac{(n-1)\mu}{3g} -\frac{\mu}g +1 \ge \frac {4\mu} g -\frac {\mu}g +1  > 0,
\]
as $n \ge 5$ and $\mu/g \ge 1$.
\end{proof}

We assume that each cycle $C$ has an orientation. Given two vertices $u, v\in V(C)$, let $C[u,v]$ denote the path connecting $u$ and $v$ along
the cycle according to its orientation  and let $C[v,u]$ denote the path connecting $u$ and $v$ along the cycle in the opposite direction. Note that $C=C[u,v]C[v,u]$. Similarly, for each path $P$, we 
denote by $P[u,v]$ the path beginning from the vertex $u$ and ending at the vertex $v$ along 
$P$. For any graph $G$, let $\underline{G}$ denote the underlying simple graph $G$, i.e., $\underline{G}$ is a simple graph with $V(\underline{G}) = V(G)$ such that  two vertices are adjacent in $\underline{G}$ if and only if they are adjacent in $G$.

\begin{lemma}\label{lem-shortcycle}
 Let $G$ be a graph and $C$ be a cycle of $G$ of length $g(G)$.
 Then the following hold.
 \begin{enumerate}
     \item If $|C| \ge 5$, then $|N_C(v)| \le 1$ for every $v\in V(G) \setminus V(C)$.
     \item If $|C| \ge 7$, then $|N_C(u)| + |N_C(v)| \le 1$ for every two adjacent vertices $u, v\in V(G)\setminus V(C)$.
     \item If $|C| \ge 8$, then for every path $v_1v_2v_3v_4v_5$ in $G-V(C)$,
     $\sum_{i=1}^5 |N_C(v_i)| \le 2$.
     \item If $|C| \ge 6$, then for every path $v_1v_2v_3$ in $G-V(C)$, $\sum_{i=1}^3 |N_C(v_i)| \le 2$.
 \end{enumerate}
\end{lemma}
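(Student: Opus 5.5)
The plan is to argue every part the same way. Suppose the claimed inequality fails. Then the vertices outside $C$ give a short path joining two vertices $x,y\in V(C)$ whose interior lies in $G-V(C)$. We combine that path with the shorter of the two arcs $C[x,y]$ and $C[y,x]$. This produces a closed walk, and in fact a cycle, since the outside path and the arc share only $x$ and $y$. Its length is at most $\ell+\lfloor |C|/2\rfloor$, where $\ell$ is the length of the outside path. Comparing this with $|C|=g(G)$ contradicts the minimality of the girth. Throughout, $N_C(v)$ is the set of distinct neighbours of $v$ on $C$, so parallel edges play no role. Only the underlying simple graph $\underline{G}$ matters, and the cycles we build have length at least $3$, because $x\ne y$ and $\ell\ge2$.

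(1) If $v\notin V(C)$ has two distinct neighbours $x,y\in V(C)$, then $\ell=2$. We get a cycle of length at most $2+\lfloor |C|/2\rfloor$, which is less than $|C|$ once $|C|\ge5$, since $\lfloor |C|/2\rfloor\le |C|-3$.

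(2) By (1), each of $u,v$ has at most one neighbour on $C$. If $|N_C(u)|+|N_C(v)|\ge2$, then $u\sim x$ and $v\sim y$ with $x,y\in V(C)$. If $x=y$, the triangle $xuv$ contradicts $g\ge 7$. Otherwise $\ell=3$, and the resulting length $3+\lfloor |C|/2\rfloor$ is less than $|C|$ when $|C|\ge7$.

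(4) Suppose the sum over the path $v_1v_2v_3$ is at least $3$. By (1), each $v_i$ contributes at most $1$, so every $v_i$ has exactly one neighbour $x_i$ on $C$. If two of the $x_i$ coincide, we get a cycle of length at most $4$ through that vertex and the path, which is impossible when $g\ge6$. Otherwise, use $v_1$ and $v_3$, giving $\ell=4$. Since $x_1,x_2,x_3$ are three distinct points on $C$, some pair among them is close together, so the bound $\ell+\lfloor|C|/2\rfloor$ can be improved. For the pair $(x_1,x_2)$ the outside path has length $3$, and we obtain $3+\mathrm{dist}_C(x_1,x_2)$. The three arcs cut out by $x_1,x_2,x_3$ sum to $|C|$, and the cycles close via paths of lengths $3$, $4$, $3$ through the $v_i$. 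Hence the three cycles obtained have total length at most $10+|C|$ (a cycle using an arc of length $a$ has length $a+$ the connector). Each has length at least $|C|$, so $3|C|\le 10+|C|$, which gives $|C|\le5$, a contradiction. In fact this arc-sum argument is cleaner than the one in (2), and I would use it uniformly.

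(3) Suppose the sum is at least $3$. Pick three indices $i<j<k$ with attachments $x_i,x_j,x_k$, counted with the possibility that a $v_i$ has only one attachment, which holds by (1). Coincident attachments give short cycles immediately. Otherwise, the three cycles formed by the three arcs and the path segments $v_i\ldots v_j$, $v_j\ldots v_k$, $v_i\ldots v_k$ have connector lengths $(j-i)+2$, $(k-j)+2$ and $(k-i)+2$. Their total is $2(k-i)+6\le 14$. Summing gives $3|C|\le |C|+14$, so $|C|\le7$, contradicting $|C|\ge 8$.

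The main obstacle I expect is bookkeeping. I need to check that each constructed closed walk is genuinely a cycle, or at least contains a cycle no longer than it, and I need to handle coinciding attachment points separately.
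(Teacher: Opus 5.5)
Your proof is correct and follows the paper's argument. Parts (1) and (2) use the same shorter-arc bound, and parts (4) and (3) use the same three-cycle sum, giving $3|C|\le |C|+10$ and $3|C|\le |C|+14$. The only deviation is in (3): you bound the connector total by $2(k-i)+6\le 14$ for an arbitrary triple $i<j<k$, whereas the paper first invokes (2) to force the triple to be $v_1,v_3,v_5$. You also explicitly rule out coinciding attachment points in (3) and (4), which the paper leaves implicit.
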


\begin{proof}
Let $C$ be a cycle of $G$ with $|C|=g(G)$.

To prove (1), suppose $|C|\geq 5$ and assume to the contrary there is a vertex $v\in V(G)\backslash V(C)$ such that $N_C(v)$ contians two distinct vertices $x$ and $y$. Assume without loss of generality that $|E(C[x,y])|\leq |C|/2$. Then the cycle $vC[x,y]v$ has $2 + |C|/2<|C|$ edges, a contradiction to our assumption that $|C|\geq 5$.

To prove (2), suppose $|C|\geq 7$ and assume to the contrary that there are two adjacent vertices $u,v\in V(G)\backslash V(C)$ such that $|N_C(u)|+|N_C(v)|\geq 2$.
By (1), we may assume that there are two vertices $x\in N_C(u)$ and $y\in N_C(v)$. If $x=y$, then $\underline{G}$ has a cycle of length $3$, a contradiction.  Thus, $x\ne y$. Assume without loss of generality that $|E(C[x,y])| \le |C|/2$. Then $uC[x,y]vu$ is a cycle of $\underline{G}$ with $3 +|C|/2 < |C| $ edges, a contradiction to our assumption that $|C|\geq 7$.

To prove (3), suppose $|C|\geq 8$ and assume to the contrary that there is a path $v_1v_2v_3v_4v_5$ of vertices in $V(G)\backslash V(C)$ such that $\sum_{i=1}^5 |N_C(v_i)| \ge 3$.
By (1), there are three vertices in $\{v_1, v_2, v_3, v_4, v_5\}$ which are adjacent to $C$, and by (2) these three vertices must be independent. Thus, we may assume there are three distinct vertices $x_1, x_3, x_5\in V(C)$ such that $v_ix_i\in E(G)$ for $i\in\{1, 3, 5\}$.  Assume without loss of generality that $x_1$, $x_3$, and $x_5$ are labeled in order along a fixed orientation of $C$. Then there are three cycles $D_1, D_2, D_3$ in $\underline{G}$ defined by $D_1:= v_1C[x_1,x_3]v_3v_2v_1$,  $D_2:= v_3C[x_3,x_5]v_5v_4v_3$, and $D_3:= v_5C[x_5,x_1]v_1v_2v_3v_4v_5$ (see Figure \ref{fig:2-3(3)}). Since $|C|$ is a shortest cycle in $G$, we have that
    \[
3|C| \le \sum_{i=1}^3 |E(D_i)| = |E(C)| +4 + 4 +6  = |C| +14.
    \]
Hence, $|C| \le 7$, a contradiction.

The proof of (4) is similar to the proof of (3) with $v_1$, $v_2$, and $v_3$ in place of $v_1$, $v_3$, and $v_5$ and $x_1$, $x_2$, and $x_3$ in place of $x_1$, $x_3$, and $x_5$ (see Figure \ref{fig:2-3(4)}). We find three cycles $D_1$, $D_2$, and $D_3$ such that
\[
3|C| \le \sum_{i=1}^3|E(D_i)| = |E(C)| + 2(2+3) = |C| +10,
\]
which implies that $|C| \le 5$, a contradiction.
\end{proof}

\begin{figure}
    \begin{subfigure}{0.4\textwidth}
        \centering
        \includegraphics[width=0.75\linewidth]{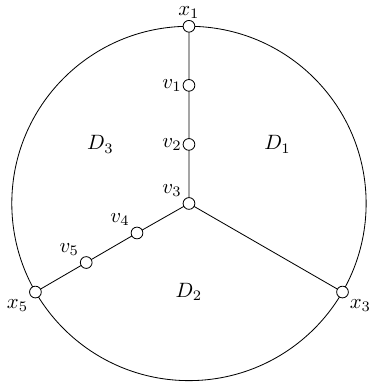}
        \caption{Cycles in Lemma \ref{lem-shortcycle}(3)}
        \label{fig:2-3(3)}
    \end{subfigure}
    \begin{subfigure}{0.4\textwidth}
        \centering
        \includegraphics[width=0.75\linewidth]{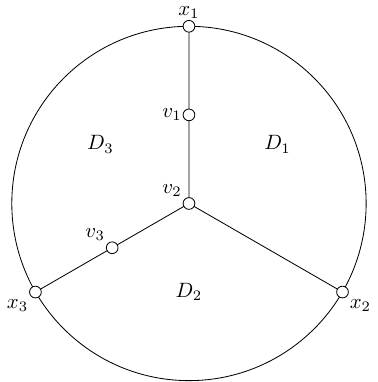}
        \caption{Cycles in Lemma \ref{lem-shortcycle}(4)}
        \label{fig:2-3(4)}
    \end{subfigure}
    \caption{} 
\end{figure}

Our proof of Theorem \ref{thm-main} is structured based upon a decomposition of the critical graphs under consideration into cycles. To this end, we define a {\it cycle partition} of a graph $G$ as a partition of $V(G)$ into  $V_0, V(C_1), \ldots, V(C_\ell)$ such that:
\begin{enumerate}
\item $C_1$ is a shortest cycle in $\underline{G}$;
\item For $2\leq i \leq \ell$, $C_{i}$ is a shortest cycle in  $\underline{G} - \bigcup_{h=1}^{i-1} V(C_h)$;
\item $\underline{G} -\bigcup_{h=1}^{\ell}V(C_h)$ is acyclic and  $V_0=V(G -\bigcup_{h=1}^{\ell}V(C_h))$.
\end{enumerate}
To simplify notation, we let $n=|V(G)|$, $n_0=|V_0|$, and we let $G_0$ be the underlying simple graph of $G[V_0]$. Note that since $C_{i}$ is a shortest cycle in  $\underline{G} - \bigcup_{h=1}^{i-1} V(C_h)$,  we may apply Lemma \ref{lem-shortcycle} to $C_i$ within this subgraph. In particular, Lemma \ref{lem-shortcycle}(1) says that any $v\in V(C_{i+1})\cup \cdots \cup V(C_{\ell})\cup V_0$ has at most one neighbor on $C_i$.

\section{The Proof of  Theorem~\ref{thm-main} for \texorpdfstring{$g\geq 6$}{ggeq6}}\label{mainpf}

Let \(g \ge 6\), and let \(G\) be a graph satisfying
\(g(G) \ge g\), \(\mu(G) \ge \lfloor g/2 \rfloor + 1\), and
\(\chi'(G) = \Delta(G) + \lceil \mu(G)/\lfloor g/2 \rfloor \rceil\).
If \(g\) is odd and $g \ge 7$, then $\lfloor g/2\rfloor=\lfloor (g-1)/2\rfloor$, so since \(g(G) \ge g \ge g-1\), the truth of the theorem for \(g\) follows immediately from the truth of the theorem for \(g-1\).
Hence, we may assume that \(g\) is even.

We will show that $G$ contains a ring graph with the same chromatic number as $G$.
We first claim that we may assume $G$ is a critical graph.
Under this assumption, it suffices to prove that $G$ itself is a ring graph.  To this end, let $H$ be a critical subgraph of $G$ such that $\chi'(H) = \chi'(G)$.  Since $H\subseteq G$, it follows that  $g(H) \ge g(G) \ge g$, $\mu(H) \le \mu(G)$, and $\Delta(H) \le \Delta(G)$. Hence,
$$\chi'(H) =\chi'(G) =\Delta(G) +\roundup{\frac{\mu(G)}{g/2}} \ge \Delta(H) + \roundup{\frac{\mu(H)}{g/2}}.$$
On the other hand, by Theorem~\ref{Thm-Steffen}, $$\chi'(H) \le \Delta(H) + \roundup{\frac{\mu(H)}{\rounddown{g(H)/2}}}.$$
Since $\Delta(H) \le \Delta(G)$ and $\roundup{\frac{\mu(H)}{\rounddown{g(H)/2}}} \le \roundup{\frac{\mu(H)}{g/2}} \le \roundup{\frac{\mu(G)}{g/2}}$,
we have that $\Delta(H) = \Delta(G)$ and $$\roundup{\frac{\mu(H)}{\rounddown{g(H)/2}}} = \roundup{\frac{\mu(G)}{g/2}}=\roundup{\frac{\mu(H)}{g/2}}.
$$
Since $\mu(G) \ge g/2+1$, it follows that $\roundup{2\mu(G)/g} \ge 2$, and so $\mu(H)\geq g/2+1$.  We further have that $\chi'(H)=\Delta(H) + \roundup{2\mu(H)/g}$. Hence, $H$ satisfies the condition of Theorem~\ref{thm-main}.  We assume $G$ is critical,  and  let $\Delta =\Delta(G)$ and $\mu=\mu(G)$.

Let $V_0, V(C_1), \ldots, V(C_{\ell})$ be a cycle partition of $G$. Since $\chi'(G) > \Delta$, $\underline{G}$ is not acyclic,  so $\ell\geq 1$. Since $|V(C_i)| \ge g$ for each $i\in [\ell]$,  $n\geq \ell g$.  By the second part of Lemma~\ref{critical},
\begin{eqnarray}\label{eq: deltaell}
\delta(G)\geq  n\mu/g +1 \geq \ell\mu+1,
\end{eqnarray}
and so
\begin{eqnarray}\label{eq: deltaUell}
\delta(\underline{G})\geq  \ell+1.
\end{eqnarray}

\begin{claim} \label{claim5}
$\delta(G_0) =1$, $\delta(\underline{G})=\ell+1$,
$n \le (\ell +1) g -1$ and $1 \le n_0\leq g-1$.  Consequently, each vertex  $v\in V_0$ with $d_{G_0}(v) = \delta(G_0) =1$ has exactly one neighbor on  $C_i$ for  each $i\in [\ell]$.
\end{claim}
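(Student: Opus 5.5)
The plan is to compare the lower bound \eqref{eq: deltaUell}, $\delta(\underline{G})\ge \ell+1$, with the structure of the cycle partition. Lemma~\ref{lem-shortcycle}(1) limits how many neighbours a vertex can have on a later cycle, and this will force both inequalities to be tight.

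First I would show that $V_0\neq\emptyset$ and bound degrees in $\underline{G}$ for vertices of $V_0$. Suppose $V_0=\emptyset$. Take a vertex $v\in V(C_\ell)$. Its neighbours lie in $V(C_1)\cup\cdots\cup V(C_\ell)$. It has at most one neighbour on each $C_i$ with $i<\ell$, by Lemma~\ref{lem-shortcycle}(1) applied to $C_i$ in $\underline{G}-\bigcup_{h<i}V(C_h)$; this uses $|C_i|\ge g\ge 6$. On $C_\ell$ itself it has exactly two neighbours, because $C_\ell$ is a shortest cycle in $\underline{G}-\bigcup_{h<\ell}V(C_h)$ and a chord would create a shorter cycle. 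So $d_{\underline{G}}(v)\le (\ell-1)+2=\ell+1$.

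This does not yet contradict \eqref{eq: deltaUell}, so I would use the sharper bound \eqref{eq: deltaell}, $\delta(G)\ge n\mu/g+1$. If $V_0=\emptyset$, then $v$ has at most $\ell+1$ neighbours, each joined to it by at most $\mu$ edges, so $d_G(v)\le(\ell+1)\mu$. I expect this bound to be consistent with \eqref{eq: deltaell}, so it gives no contradiction by itself. The cleaner route is therefore to work with $V_0$ first.

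Now consider $V_0$. The graph $G_0$ is a forest. If $V_0\neq\emptyset$, then $G_0$ has a vertex $v$ with $d_{G_0}(v)\le 1$. Again $v$ has at most one neighbour on each $C_i$, so $d_{\underline{G}}(v)\le \ell+d_{G_0}(v)\le \ell+1$. Combined with \eqref{eq: deltaUell}, this forces $d_{G_0}(v)=1$ and forces exactly one neighbour on each $C_i$. Hence $\delta(\underline{G})=\ell+1$ and $\delta(G_0)=1$; an isolated vertex of $G_0$ would have degree at most $\ell$ in $\underline{G}$, which is impossible. This gives the final sentence of the claim. The same argument shows $\delta(\underline{G})=\ell+1$ when $V_0$ is empty, using $v\in V(C_\ell)$.

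For the vertex counts I would use \eqref{eq: deltaell} together with the multiplicity bound. A vertex $v$ of minimum degree in $\underline{G}$ has $\ell+1$ neighbours, so
\[
n\mu/g+1\le d_G(v)\le (\ell+1)\mu .
\]
This gives $n<(\ell+1)g$, that is, $n\le(\ell+1)g-1$. Since $n\ge \ell g+n_0$, we get $n_0\le g-1$.

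The remaining step, and the main obstacle, is $n_0\ge 1$, i.e. ruling out $V_0=\emptyset$. I would use the equality case of Lemma~\ref{critical}. If $V_0=\emptyset$, take $v\in V(C_\ell)$ of degree at most $(\ell+1)\mu$. We have $n\ge \ell g$, and the inequality above forces $n\le(\ell+1)g-g/\mu$. This alone may not exclude $n=\ell g$, where $n\mu/g+1=\ell\mu+1\le(\ell+1)\mu$ holds. So I would refine the bound for vertices on $C_\ell$. Lemma~\ref{critical} gives $n$ odd, so some cycle $C_i$ has odd length; with $g$ even, that cycle has length at least $g+1$, which pushes $n\ge \ell g+1$. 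I would then use the exact identity $d_G(v)=\sum_{w\ne v}(\chi'-1-d_G(w))+2$ to show every vertex needs degree larger than what $\ell+1$ neighbours with multiplicity $\mu$ can supply. If this fails, the fallback is to treat $V_0=\emptyset$ directly: show that then $\ell=1$, that $G$ is the cycle $C_1$ with multiplied edges, and hence already a ring graph, which either contradicts the claim's setup or finishes the theorem early.
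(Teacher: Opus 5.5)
You correctly derive $\delta(\underline{G})=\ell+1$, $\delta(G_0)=1$ (when $V_0\neq\emptyset$), the last sentence of the claim, and the bounds $n\le(\ell+1)g-1$ and $n_0\le g-1$, essentially as the paper does. The gap is the step you flag yourself: you never exclude $V_0=\emptyset$, and the counting you sketch does not settle it. Parity only gives $n\ge \ell g+1$. With, say, $\mu=g$ and $n=\ell g+1$, the bound $\delta(G)\ge n\mu/g+1=\ell g+2$ is compatible with $d_G(v)\le(\ell+1)\mu=(\ell+1)g$. The exact identity of Lemma~\ref{critical} does not help either. For any single $v$ it rearranges to the global relation $2|E(G)|=(n-1)(\chi'-1)+2$, so it says nothing about where vertices sit in the cycle partition. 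Your fallback (``show that then $\ell=1$'') is exactly the statement that needs proof, and you give no argument for it.

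The missing idea is structural, and it is one step beyond your bound $d_{\underline{G}}(v)\le \ell+1$ for $v\in V(C_\ell)$: you need a vertex of $C_\ell$ that loses one more neighbour. Suppose $\ell\ge 2$. Any three consecutive vertices of $C_\ell$ form a path in $G-V(C_1)$. Since $|C_1|=g(G)\ge 6$, Lemma~\ref{lem-shortcycle}(4) says these three vertices have at most two neighbours on $C_1$ in total. So some $u\in V(C_\ell)$ has $N_{C_1}(u)=\emptyset$. This $u$ has at most one neighbour on each of $C_2,\dots,C_{\ell-1}$, exactly two on $C_\ell$, and none elsewhere because $V_0=\emptyset$. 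Hence $d_{\underline{G}}(u)\le \ell$, contradicting \eqref{eq: deltaUell}. If $\ell=1$ and $V_0=\emptyset$, then $\underline{G}=C_1$, since a chord would give a shorter cycle. So $G$ is already a ring graph and the theorem holds in that case; this is how the paper disposes of it.
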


\begin{proof}
We first show that $n_0\geq 1$, i.e., $V_0 \ne \emptyset$. Suppose to the contrary that $V_0 = \emptyset$.  If $\ell =1$, $G=G[V(C_1)]$ is a ring graph, and we are done. Hence, we may assume $\ell\geq 2$. Since $|C_\ell| \ge 6$, by Lemma~\ref{lem-shortcycle} (4),  there exists a vertex  $u\in V(C_\ell)$ such that $N_{C_1}(u) = \emptyset$.   By Lemma~\ref{lem-shortcycle}(1), $u$ has at most one neighbor in each $C_i$ for $i=2, \dots, \ell -1$. Since $C_\ell$ is a shortest cycle in  $G-\bigcup_{i=1}^{\ell -1}V(C_i)$,  $u$  has exactly two neighbors in $C_\ell$. Hence,  $d_{\underline{G}}(u) \le \ell$, contradicting (\ref{eq: deltaUell}).

By Lemma~\ref{lem-shortcycle}(1), each vertex in $V_0$ has at most one neighbor in $C_i$ for each $i\in\{1, \dots, \ell\}$. Hence, $d_{\underline{G}}(v) \le d_{G_0}(v) +\ell$ for each vertex $v\in V_0$.
By definition,  $G_0$ is acyclic, and so $\delta(G_0) \le 1$.
On the other hand,  $\delta(\underline{G}) \ge \ell +1$ by (\ref{eq: deltaUell}). Hence, $\delta(G_0) =1$ and $\delta(\underline{G}) = \ell +1$. Moreover, for each $v\in V_0$ if $d_{G_0}(v) =\delta(G_0) =1$, then $|N_{C_i}(v)| =1$ for each $i\in [\ell]$, which proves the second part of the claim.

Since $\delta(\underline{G})=\ell+1$, we have that
$\delta(G)\leq (\ell+1)\mu$. As we additionally have that
$\delta(G) \ge n\mu/g +1$ by \eqref{eq: deltaell}, it follows that  $n  \le  (\ell +1)g-1$. As $n_0\leq n-\ell g$, this also implies that $n_0\leq g-1$.
\end{proof}

Given a positive integer $t$, a cycle $C_h$ for some $h \in [\ell]$, and a vertex $v_0 \in V_0$, a {\it  $t$-fan $T$ from $v_0$ to $C_h$} is the union of $t$ pairwise internally disjoint paths $P_1, \dots, P_t$ in $\underline{G}[V_0\cup V(C_h)]$ from $v_0$ to $C_h$ such that for each path $P_i$, all of its vertices are in $V_0$ except for the last vertex of $P_i$, which is on the cycle $C_h$.
For such a $t$-fan $T$, we define
\(T^0 := T - V(C_h) = T \cap G_0\)
to be the subtree of $T$ contained in $G_0$. Note that   $|T^0| = |T| -t= |E(T)|-(t-1)$.

For each cycle $C_h$ with $h \in [\ell]$, since $G_0$ has no isolated vertices and every leaf of $G_0$ has a neighbor on $C_h$,
there is a $2$-fan from any of the vertices in each component of $G_0$ to $C_h$.
Moreover, if a vertex $v_0 \in V_0$ has degree at least $t$ in $\underline{G}[\,V_0 \cup V(C_h)\,]$, then there exists a $t$-fan from $v_0$ to $C_h$.

\begin{claim}\label{cla-tFan}
If there is a $t$-fan $T$ from a vertex $v_0\in V_0$ to $C_h$ for some $h\in [\ell]$, then $t\leq 3$ and
\[
|T^0| \ge (t-1)|C_h|/2 -(t-1).
\]
Consequently, $t\leq 3$. Moreover, if $t=2$, then $|T^0|\geq g/2 - 1$ and if $t=3$, then $|T^0|\geq g-2$ and $G_0$ is a tree.
\end{claim}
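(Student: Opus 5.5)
The plan is to compare $T$ against the minimality of $C_h$: splice consecutive fan paths with arcs of $C_h$ to obtain $t$ cycles, then sum their lengths.

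\emph{Setup.} Let $T=P_1\cup\dots\cup P_t$. Let $x_i\in V(C_h)$ be the last vertex of $P_i$. All vertices of $T$ lie in $V_0\cup V(C_h)\subseteq V(G)\setminus\bigcup_{j<h}V(C_j)$. So every cycle of $\underline{G}$ built from $T$ and $C_h$ is a cycle of $\underline{G}-\bigcup_{j<h}V(C_j)$. Since $C_h$ is a shortest cycle in that graph, every such cycle has length at least $|C_h|$.

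\emph{Cycles and the length count.} Relabel the paths so that $x_1,\dots,x_t$ appear in this cyclic order along the orientation of $C_h$, with indices taken mod $t$. For each $i$ let
\[
D_i := P_i\, C_h[x_i,x_{i+1}]\, P_{i+1}^{-1}.
\]
Because the $P_i$ are internally disjoint and meet $C_h$ only at their last vertices, $D_i$ is a cycle. If $x_i=x_{i+1}$, the arc is empty and $D_i=P_i\cup P_{i+1}$ is still a cycle; this causes no harm. The arcs $C_h[x_i,x_{i+1}]$ partition $E(C_h)$, and each $P_i$ lies in exactly two of the $D_j$. This remains true for $t=2$, where $D_1$ and $D_2$ use the two complementary arcs. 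Moreover $\sum_i |E(P_i)|=|E(T)|=|T^0|+t-1$. Hence
\[
t|C_h|\le \sum_{i=1}^t |E(D_i)| = |C_h| + 2\bigl(|T^0|+t-1\bigr),
\]
which gives $|T^0|\ge (t-1)|C_h|/2-(t-1)$.

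\emph{Consequences.} The specific cases follow from $|C_h|\ge g$.
\begin{itemize}
\item For $t=2$ we get $|T^0|\ge |C_h|/2-1\ge g/2-1$.
\item For $t=3$ we get $|T^0|\ge |C_h|-2\ge g-2$.
\item To see $t\le 3$, suppose $t\ge 4$. Any $4$ of the paths form a $4$-fan, so $|T^0|\ge 3g/2-3\ge g$ because $g\ge 6$. But $T^0\subseteq G_0$, so Claim~\ref{claim5} gives $|T^0|\le n_0\le g-1$, a contradiction.
\end{itemize}

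\emph{$G_0$ is a tree when $t=3$.} We have $|T^0|\ge g-2$ and $n_0\le g-1$, so at most one vertex of $V_0$ lies outside the subtree $T^0$. By Claim~\ref{claim5}, $\delta(G_0)=1$, so $G_0$ has no isolated vertices. A single vertex outside $T^0$ therefore has a neighbor in $G_0$, and that neighbor must be in $T^0$. Thus $G_0$ is connected. Since $G_0$ is acyclic by the definition of a cycle partition, it is a tree.

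\emph{Main obstacle.} The step needing the most care is verifying that each $D_i$ is a genuine cycle lying in the right subgraph. This covers the possibility of coinciding endpoints and the degenerate case $t=2$. Once that is settled, the argument is the same averaging idea used in Lemma~\ref{lem-shortcycle}(3),(4).
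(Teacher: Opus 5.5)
Your proof is correct and is essentially the paper's argument: the same cycles $D_i=P_i\,C_h[x_i,x_{i+1}]\,P_{i+1}^{-1}$, the same double count $t|C_h|\le |C_h|+2(|T^0|+t-1)$, and the same use of $n_0\le g-1$ from Claim~\ref{claim5}, both to rule out $t\ge 4$ and to show $G_0$ is a tree. Two small points in your favour: your bound $3g/2-3\ge g$ (using $g\ge 6$) is more careful than the paper's strict inequality $3|C_h|/2-3>|C_h|$, which fails when $|C_h|=6$; and the coinciding-endpoint case you treat is one the paper implicitly excludes via $|T^0|=|T|-t$. In that case, if all $x_i$ coincide, one ``arc'' is the whole of $C_h$ and that $D_i$ is only a closed walk, but its length is trivially at least $|C_h|$, so the inequality survives.
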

\begin{proof}
Let $T$ be as described in the claim, consisting of paths $P_1, \dots, P_t$ whose endpoints on $C_h$ are $x_1, x_2, \dots, x_t$, respectively. Assume without loss of generality that the endpoints $x_1,\dots,x_t$ occur in order of their indices along a fixed orientation of $C_h$. For each $i\in [t]$, let $D_i:= P_i[v_0,x_i]C_h[x_i,x_{i+1}]P_{i+1}[x_{i+1},v_0]$, where $x_{t+1} =x_1$ and $P_{t+1} =P_1$ (See Figure \ref{fig:3-2}).
\begin{figure}[h]
    \centering
    \includegraphics[width=0.3\linewidth]{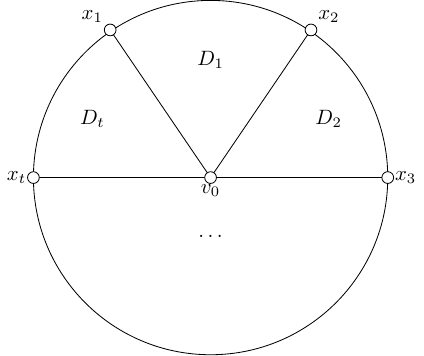}
    \caption{Cycles in Claim \ref{cla-tFan}}
    \label{fig:3-2}
\end{figure}

Note that each $D_i$ is a cycle in $\underline{G} -  \bigcup_{s=1}^{h-1}V(C_s)$,  so $|D_i|\geq |C_h|$. In the sum $\sum_{i=1}^t |E(D_i)|$, each edge in $C_h$ is counted once and each edge in $T$ is counted twice, so
 \[
 t|C_h| \le \sum_{i=1}^t|E(D_i)| = 2|E(T)| +|E(C_h)|=2(|T^0| +t-1) +|C_h|.
 \]
Hence, $|T^0| \ge (t-1)|C_h|/2 -(t-1)$.

If $t\ge 4$ then $|T^0| \ge 3|C_h|/2 -3 > |C_h| \ge g$, giving a contradiction to $n_0 \le g-1$ (Claim~\ref{claim5}). If $t=2$, then $|T^0| \ge |C_h|/2 -1 \ge g/2 -1$.
If $t=3$, then $|T^0| \ge |C_h| -2\ge g-2$. Since $n_0 \le g-1$, it follows that $|V_0\setminus V(T^0)| \le 1$. Since $G_0$ does not contain an isolated vertex, $G_0$ is a tree.
\end{proof}

We divide the remainder of the proof into a number of cases according to the values of $g$ and $\ell$.  Note that when $\ell=1$, we have that $n\leq 2g-1$ by Claim~\ref{claim5} and we divide into two subcases according to whether we have that either $n<2g-1$ or $|C_1|>g$ or we have that $n=2g-1$ and $|C_1|=g$.

Let $W=\{w\in V(G) \ : \ N_{C_1}(w)=\emptyset\}$.

\subsection{\texorpdfstring{${\mathbf{g\geq 8}}$}{ggeq8} and \texorpdfstring{${\mathbf{\ell\geq 3}}$}{lgeq3}}\label{S:bigEven}

Since $C_1$ is a shortest cycle in $\underline{G}$ and $|C_1| \ge 7$,  by Lemma~\ref{lem-shortcycle}(2), for every two adjacent vertices outside $C_1$, at least one of them is in $W$ (i.e. has no neighbors on $C_1$).  Hence, for each $2\le i \le \ell$, we have that
\[|W\cap V(C_i)| \ge \roundup{\frac{|V(C_i)|}{2}} \ge \roundup{\frac{|C_1|}{2}}\ge \frac{g}{2}.\] Since $\ell\ge 3$ this implies that $|W| \ge g$. Since each vertex in $V(G)\setminus (V(C_1)\cup W)$ has at most one neighbor on $C_1$ (by Lemma \ref{lem-shortcycle}(1)),
$|E(\underline{G}(V(C_1), V(G)\setminus V(C_1))| \le n -|C_1| - |W|$.
Hence, there exists a vertex $v_1\in V(C_1)$ such that
\begin{align*}
    d_{G}(v_1)\leq \left(\dfrac{n-|C_1|-|W|}{|C_1|}+2\right)\mu\leq  \left(\dfrac{n-2g}{g}+2\right)= \left(\dfrac{n}{g}\right)\mu,
\end{align*}
a contradiction to Lemma~\ref{critical}.

\subsection{\texorpdfstring{${\mathbf{g= 6}}$}{g=6} and \texorpdfstring{$\mathbf{\ell\geq 4}$}{lgeq4}.}

Since $g\geq 6$, by Lemma~\ref{lem-shortcycle}(4),  every three consecutive vertices of any cycle $C_j$ for $j\geq 2$ have at most two neighbors in \( C_1 \). Hence, $|W\cap C_i| \ge |C_i|/3 \ge |C_1|/3 $ for each $i\in \{2, \dots, \ell\}$. Since $\ell\geq 4$, it follows that $|W|  \ge  |C_1|$. Hence, there is a vertex $v_1\in C_1$ such that
$$d_{G}(v_1)\leq \left(\frac{n-|C_1|-|C_1|}{|C_1|}+2\right)\mu=\left(\frac{n}{|C_1|}\right)\mu\le \left(\frac{n}{g}\right)\mu,$$
a contradiction to Lemma~\ref{critical}.

\subsection{\texorpdfstring{${\mathbf{g= 6}}$}{g=6} and \texorpdfstring{$\mathbf{\ell=3}$}{l=3}.}
Let $P=v_0a_1\ldots a_tw_0$ be a longest path in $G_0$. Since $\delta(G_0)=1$ (by Claim \ref{claim5}), $v_0$ and $w_0$ are distinct leaves in $G_0$ (since $G_0$ is acyclic). By Claim \ref{claim5},  they are both adjacent to vertices on each of $C_1, C_2$, and $C_3$.

If $|P|=2$, then $|C_1|=|C_2|=|C_3|=6$; otherwise, we get a contradiction to Lemma \ref{lem-shortcycle}(2). Since $n$ is odd,   $n_0 = n -18$ is odd.  On the other hand,  since the longest path $P$ of $G_0$ contains two vertices and $G_0$ does not contain isolated vertices, each component of $G_0$ has exactly two vertices.  Hence $n_0$ is even, a contradiction.

If $|P|=3$, by Lemma \ref{lem-shortcycle}(4), $a_1$ cannot have any neighbors on $C_1, C_2$, or $C_3$, as both $v_0$ and $w_0$ have neighbors in each of these three cycles. Hence, $d_{G_0}(a_1) \ge \delta(\underline{G}) = 4$ (by Claim \ref{claim5}), so  there is a $4$-fan  from $a_1$ to $C_1$, contradicting Claim \ref{cla-tFan}.

We now consider the case $|P|\geq 4$. By Claim~\ref{claim5}, $n_0\le g-1 =5$. We claim that $G_0 = P$. Otherwise, let $x_0$ be the vertex in $V_0\setminus V(P)$. Since $G_0$ does not contain isolated vertices and $P$ is a longest path in $G_0$, we may assume $x_0\sim a_1$. Thus, $d_{G_0}(a_1) =3$. Since $\delta(\underline{G}) =4$ (by Claim \ref{claim5}), there is some $h\in [3]$ such that $N_{C_h}(a_1) \ne \emptyset$.
Hence, there is a $4$-fan from $a_1$ to $C_h$, giving a contradiction to Claim~\ref{cla-tFan}.

Since  $\delta(\underline{G}) = 4$, each of $a_1$ and $a_2$ must have neighbors on at least two of $C_1, C_2, C_3$. But then there is some cycle $C_i$ among $C_1, C_2$, and $C_3$ such that each of $v_0, a_1$ and $a_2$ have a neighbor on $C_i$, contradicting Lemma \ref{lem-shortcycle}(4).

\subsection{\texorpdfstring{${\mathbf{g\ge 8}}$}{ggeq8} and \texorpdfstring{$\mathbf{\ell= 2}$}{l=2}.} Let $P=v_0a_1\ldots a_tw_0$ be a longest path in $G_0$.  Since $G_0$ does not contain isolated vertices (by Claim \ref{claim5}), $v_0$ and $w_0$ are distinct leaves in $G_0$. Hence, there is a $2$-fan $T$ such that $T^0=P$.  By Claim~\ref{cla-tFan},   $|P| \ge g/2 -1 \ge 3$ as $g\ge 8$, so $t\geq 1$.

By Claim \ref{claim5}, $v_0$ has a neighbor on both $C_1$ and $C_2$, which in turn shows that  $a_1$ does not have any neighbors in $V(C_1)\cup V(C_2)$ (by Lemma~\ref{lem-shortcycle}(2) and as $g>7$).
By Claim~\ref{claim5},
$\delta(\underline{G})=3$. Hence, $a_1$ has at least one neighbor, say $v$, in $V_0\setminus V(P)$. Thus, $va_1P[a_1, w_0]$ is also a longest path in $G_0$, so $v$ must also be a leaf in $G_0$.
By Claim \ref{claim5},
$v$ has a neighbor on $C_1$. It follows that there is a $2$-fan $T$ from $a_1$ to $C_1$ such that $T^0=v_0a_1v$. By Claim~\ref{cla-tFan}, $3=|T^0|\ge g/2-1$, which implies that  $g=8$. Note that $n_0\leq g-1=7$ by Claim \ref{claim5}.

Since each of $v_0, w_0$, and $v$ has a neighbor on $C_1$, there is a 3-fan from $a_1$ to $C_1$. Thus, by Claim~\ref{cla-tFan}, there are at least $g-2=6$ vertices from $G_0$ in this 3-fan, so $|P|+1 \geq 6$; i.e., $|P|\geq 5$ and $a_2, a_3$ exist. Since $v_0, w_0$ are leaves in $G_0$, $|N_{C_h}(v_0)| + |N_{C_h}(w_0)| =2$ for each $h\in [2]$. If $|P|=5$, then by Lemma~\ref{lem-shortcycle}(3), for each $h\in [2]$, $\sum_{i=2}^3|N_{C_h}(a_i)| =0$.  Since $\delta(\underline{G})=3$, vertices  $a_2$ and $a_3$ must each have a neighbor in $V_0\setminus V(P)$. Since $G_0$ is acyclic, these neighbors must be outside $V(P)\cup\{v\}$ and distinct, which in turn shows that  $n_0\geq |P|+3\geq 8$, a contradiction. If $|P| =6$, then $V_0 = V(P)\cup \{v\}$ as $n_0\le 7$. By Lemma~\ref{lem-shortcycle}(3), for each $h\in [2]$, $\sum_{i=2}^4|N_{C_h}(a_i)| \le 1$. Hence, one of $a_2, a_3, a_4$ has a neighbor in $V_0\setminus (V(P)\cup \{v\})$, giving a contradiction to $V_0 = V(P)\cup \{v\}$.

\subsection{\texorpdfstring{${\mathbf{g= 6}}$}{g=6} and \texorpdfstring{$\mathbf{\ell=2}$}{l=2}.} In this case $\delta(\underline{G}) =3$ by Claim~\ref{claim5}.
Since $n$ is odd, $\underline{G}$ is not a cubic graph. Hence, there is a vertex $v\in V(G)$ with  $d_{\underline{G}}(v) = t\geq 4$.
Let $N_1=N_{G}(v)$, $N_2= N_G(N_G(v)) \setminus N_G(v)$, $X=\{v\}\cup N_1\cup N_2$ and $Y= V(G) \setminus X $. Since every two vertices in $X$ are distance at most $4$ apart, the graph $\underline{G}[X]$ induced by $X$ in $\underline{G}$ is a tree, and $N(u)\cap N(w) = \{v\} $ for any two distinct vertices $u, w\in N(v)$. Hence, $|N_2| \ge 2t$ and $|X| \ge 1+3t \ge 13$.
Since $n\leq (\ell+1)g-1=17$, this means that $|Y| \le 4$. Since $\delta(\underline{G}) =3$ by Claim~\ref{claim5}, every vertex $w\in N_2$ has at least two neighbors outside $X$. Denote by $Y_w$ a $2$-set of neighbors of $w$ in $Y$. Since $|Y| \le 4$, there are at most six subsets of $Y$ of size $2$. Thus, there are two vertices $w_1, w_2\in N_2$ such that $Y_{w_1} = Y_{w_2}$. Thus, $\underline{G}$ has a $4$-cycle, a contradiction.

\subsection{\texorpdfstring{${\mathbf{\ell= 1}}$}{l=1}, and \texorpdfstring{${\mathbf{n<2g-1}}$}{n<2g-1} or \texorpdfstring{$\mathbf{ |C_1| > g}$}{|C1|>g}}
In this case, we first claim that $G_0$ is a path,   $|N_{G_0}(C_1)|=2$, and $\sum_{v\in V_0}|N_{C_1}(v)| \le 2$.
To justify this, we consider the case that $n<2g-1$ and the case that $|C_1|>g$.
Note first that if $n< 2g-1$, then $n \le 2g -3$
as $n$ is odd,  and so $n_0 \le n -|C_1| \le g-3$. Second, if $|C_1| \ge g+1$, then $n_0 \le 2g-1-|C_1|\le |C_1| -3$. Hence in both cases, we have that $n_0\le |C_1| -3$.
If $G_0$ is disconnected, let $T_1, T_2$ be two components of $G_0$. Each of $T_1, T_2$ is a tree with at least two leaves and hence creates a 2-fan to $C_1$ (by Claim \ref{claim5}).   By Claim \ref{cla-tFan}, $n_0 \ge |T_1|+ |T_2| \ge 2(|C_1|/2 -1) =2|C_1| -2$, giving a contradiction to $n_0 \le |C_1| -3$.
Hence  $G_0$ is connected. If there is a $3$-fan $T$ from a vertex $v_0\in V_0$ to $C_1$, then by Claim~\ref{cla-tFan}, $|T^0| \ge |C_1|-2$, again contradicting $n_0\leq |C_1|-3$.  Thus, $G_0$ is connected and $\Delta(G_0) \le 2$, and so $G_0$ is a path. Consequently,  $|N_{G_0}(C_1)|=2$ and $\sum_{v\in V_0}|N_{C_1}(v)| \le 2$, as otherwise there is a $3$-fan from a vertex $v\in V_0$ to $C_1$.

By Claim \ref{cla-tFan},  $n_0\geq |C_1|/2-1$, and hence  $n\geq 3|C_1|/2 -1 \ge 3g/2-1$.
Since $\sum_{v\in V_0}|N_{C_1}(v)| \le 2$,
let $N_{C_1}(G_0) = \{x, y\}$. If $x=y$, then $\underline{G}$ has a cycle of length $n_0+1 \le |C_1|-2$, a contradiction.  Thus, $x\ne y$,  and so $\underline{G}$ is a $\Theta$-graph consisting of three internally disjoint paths from $x$ to $y$, say $P_1$, $P_2$, and $P_3$. Since $n_0\le g-3$ and $g(G) \ge g$, vertices $x$ and $y$ are not adjacent on $C_1$, and we have that $|P_i|\geq 3$ for all $i$. Since $G$ is not a bipartite graph (as $\chi'(G) > \Delta(G)$) and $n$ is odd, one of $P_1, P_2, P_3$ has an odd number of vertices and the other two have an even number of vertices; assume that $|P_1|$ and $|P_2|$ are even and that $|P_3|$ is odd.

Let $P_i = xv_{i,1}v_{i,2} \dots, v_{i, t_i} y$ for each $i\in [3]$. Then, $t_1$ and $t_2$ are even and $t_3$ is odd, and $H:=\underline{G}-\{v_{1,1}v_{1,2}, v_{2,1}v_{2,2}\}$ is a bipartite graph. Let $A, B$  be the color classes of $H$. Then, $x, y$ are in the same color class, say $A$. Moreover, it is not difficult to see that $|A| \le \rounddown{|V(G)\setminus\{v_{1,1}, v_{2,1}\}|/2} = (n-3)/2$.    We have that
$$
|E(G)|\leq |A| \Delta +|E(v_{1,1}, v_{1,2})| + |E(v_{2,1}, v_{2,2})| \leq \left(\dfrac{n-3}{2}\right)\Delta+2\mu.
$$
By Theorem \ref{Thm-CJZ} and since $G$ is critical,  $\chi'(G)=\Gamma(G) =\roundup{2|E(G)|/(n-1)}$. Hence,
$$\Delta+2 \leq \chi'(G)=\Gamma(G) \leq \left\lceil \frac{2\left(\left(\tfrac{n-3}{2}\right)\Delta+2\mu\right)}{n-1}\right\rceil=\Delta+\left\lceil\frac{4\mu-2\Delta}{n-1}\right\rceil.  $$
Consequently,  $4\mu-2\Delta> n-1$.
Since $n$ is odd, we have $4\mu-2\Delta\geq n+1$, and so
\[
\Delta\leq 2\mu-\dfrac{n+1}{2}.
\]

Directly applying  $2|E(G)| = \sum_{v\in V(G)} d_G(v) \leq n\Delta$, we get the following.
$$\Delta+ \left\lceil \frac{2\mu}{g}\right\rceil=\chi'(G)=\Gamma(G) =\roundup{\frac {2|E(G)|}{n-1}}\leq \left\lceil \frac{n\Delta}{n-1}\right\rceil=\Delta+\left\lceil\frac{\Delta}{n-1}\right\rceil.$$
Hence, $\roundup{2\mu/g} \le \roundup{\Delta/(n-1)}$, and so $\left\lceil {2\mu/g} \right\rceil -1<{\Delta/(n-1)}$. Substituting  $\Delta\leq 2\mu-\tfrac{n+1}{2} \leq g \roundup{\tfrac{2\mu}g} -\tfrac{n+1}2$, we have
\begin{equation}\label{eq: nDelta}
\left\lceil \frac{2\mu}{g} \right\rceil -1< \frac{\Delta}{n-1}\le \frac{1}{n-1} \left(g\roundup{\frac{2\mu}g} -\frac{n+1}2\right).
\end{equation}
Since $\frac{g} {n-1}\le 1$ and $\roundup{\frac{2\mu}g} \ge 2$,
it follows that
$1< \frac{1}{n-1}(2g -\frac{n+1}2)$, and so $n < (4g+1)/3$.
On the other hand, $n = n_0 +|C_1| \geq (g/2-1) +g = 3g/2 -1$. Solving inequality  $3g/2 -1 <  (4g+1)/3$,  we get $g < 8$, and so $g=6$. Then, $8 =3\cdot 6/2 -1 \le n < (4\cdot 6+1)/3 =8.33\dots $. So, $n=8$, giving a contradiction  to the fact that $n$ is odd.

\subsection{\texorpdfstring{${\mathbf{\ell= 1}}$}{l=1}, \texorpdfstring{${\mathbf{n=2g-1}}$}{n=2g-1} and \texorpdfstring{$\mathbf{ |C_1| =g}$}{|C1|=g} } In this case, $n_0=g-1$.  We first prove the following claim.

\begin{claim}\label{cla: chi'} The following two statements hold.
\begin{enumerate}
\item $\chi'(G)=\Delta+2$,  $\mu=g$, and $\delta(G)=2\mu$.
\item If there exist vertices $v, x$ and $y$ with $v\sim x$ and $v\sim y$ such that  $d_{\underline{G}}(v)\geq 3$ and $d_{\underline{G}}(x)=d_{\underline{G}}(y)=2$, then $x$ and $y$ are the only vertices of degree two in $\underline{G}$.
\end{enumerate}
\end{claim}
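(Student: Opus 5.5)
Part (1) will come from the degree bound in Lemma~\ref{critical}. Here $\ell=1$, so $\delta(\underline{G})=2$ by Claim~\ref{claim5}, and hence $\delta(G)\le 2\mu$. With $n=2g-1$, Lemma~\ref{critical} gives
\[
\delta(G)\ge \frac{n\mu}{g}+1=2\mu-\frac{\mu}{g}+1 .
\]
These two bounds only give $\mu\ge g$, so the strict form of the lemma is needed. First apply the exact bound $\delta(G)\ge (n-1)\bigl(\roundup{2\mu/g}-1\bigr)+2$, which gives
\[
2\mu\ge (2g-2)\Bigl(\roundup{\frac{2\mu}{g}}-1\Bigr)+2 .
\]
If $\roundup{2\mu/g}\ge 3$, write $\roundup{2\mu/g}=k$, so $2\mu\le kg$. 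Then $kg\ge (2g-2)(k-1)+2$, i.e.\ $(k-2)g\le 2k-4$, i.e.\ $g\le 2$, a contradiction. Hence $\roundup{2\mu/g}=2$, so $\chi'=\Delta+2$ and $\mu\le g$. Now $2\mu\ge 2\mu-\mu/g+1$ forces $\mu\ge g$, so $\mu=g$. Finally, $2\mu\ge\delta(G)\ge (2g-2)+2=2g=2\mu$, so $\delta(G)=2\mu$. In particular every vertex of degree $2$ in $\underline{G}$ has both of its edges of full multiplicity $\mu$.

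For part (2), use the exact identity of Lemma~\ref{critical},
\[
d_G(v)=\sum_{w\ne v}\bigl(\chi'-1-d_G(w)\bigr)+2=\sum_{w\ne v}\bigl(\Delta+1-d_G(w)\bigr)+2 .
\]
Every term is at least $1$, so $d_G(v)\ge 2g$ for all $v$, and equality $d_G(u)=2g$ holds exactly when $d_G(w)=\Delta$ for all $w\ne u$. Apply this to the given $x$ and $y$. Since $d_{\underline{G}}(x)=2$, we have $d_G(x)\le 2\mu=2g=\delta(G)$, so $d_G(x)=2g$, and hence every other vertex has degree $\Delta$. The same holds for $y$. Thus all vertices have $G$-degree $\Delta$, including $x$, so $\Delta=2g$. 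Then every vertex has $G$-degree $2g=2\mu$, i.e.\ $G$ is $2\mu$-regular. Substituting into the identity gives $d_G(v)=(n-1)\cdot 1+2=2g$, which is consistent.

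So regularity alone gives no contradiction, and the hypothesis $d_{\underline{G}}(v)\ge 3$ must be used structurally. Any vertex $z$ of degree $2$ in $\underline{G}$ has both incident edge-bundles of multiplicity $\mu=g$. Since $x\sim v$ with multiplicity $\mu$ and $v$ has degree $2\mu$ and at least three neighbours in $\underline{G}$, the two bundles $vx$ and $vy$ cannot both have multiplicity $\mu$. This already seems to contradict the hypothesis, which suggests the intended reading differs: perhaps degree is meant only in the underlying sense with $\Delta$ larger, or part (1) is meant to hold only under a separate argument.

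The step I expect to be the real obstacle is therefore exploiting the exact identity: showing that two degree-$2$ vertices force all others to have degree $\Delta$, and then deriving a contradiction from a third degree-$2$ vertex $z$ via a parity or edge-counting argument. Concretely, I would compare $|E(G)|=n\Delta/2$ with the count forced by each degree-$2$ vertex contributing two full $\mu$-bundles. If $d_G(x)=d_G(y)=2\mu$ all lie at the minimum while $v$ must reach $\Delta$, the count $\sum_{w\ne v}(\Delta+1-d_G(w))$ evaluated at $v$ picks up an extra $(\Delta-2\mu)$ from each degree-$2$ vertex. I would aim to show that three such vertices push $d_G(v)$ beyond $\Delta$, which is the contradiction.
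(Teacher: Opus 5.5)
Part (1) is correct and is essentially the paper's argument. Both use $\delta(\underline{G})=2$ together with the exact lower bound of Lemma~\ref{critical} to force $\roundup{2\mu/g}=2$, and from this obtain $\mu=g$ and $\delta(G)=2\mu$.

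For part (2), your suspicion that the statement must be read differently is unfounded: the contradiction you reached is itself the proof. You correctly show that $d_G(x)=2g=\delta(G)$ forces $d_G(w)=\Delta$ for all $w\neq x$, so $\Delta=d_G(y)=2g$. Since $x,y$ have $\underline{G}$-degree $2$ and $G$-degree $2\mu$, we get $\mu(v,x)=\mu(v,y)=\mu$. The third neighbour of $v$ then gives $d_G(v)\ge 2\mu+1>\Delta$. So the configuration in the hypothesis of (2) cannot occur under the standing assumptions, and the implication holds vacuously. You should conclude there and drop the unexecuted fallback with a third vertex $z$; that plan would also work, but it is superfluous.

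This is a different and sharper route than the paper's. The paper bounds $2|E(G)|\le (n-t)\Delta+2t\mu$ for $t$ vertices of $\underline{G}$-degree $2$ and compares this with $\Delta+2=\Gamma(G)=\roundup{2|E(G)|/(n-1)}$. Because of the ceiling, this only yields $\Delta<2g+1$ when $t\ge 3$, so the paper proves exactly the literal conclusion $t\le 2$. You instead use the exact value $2|E(G)|=(\chi'-1)(n-1)+2$ underlying the identity in Lemma~\ref{critical}, which has no rounding loss. It shows that two vertices of $\underline{G}$-degree $2$ already force $\Delta=2g$; equivalently, if $\Delta>2g$ then $\underline{G}$ has at most one vertex of degree $2$. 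This stronger form would also shorten the paper's later uses of the claim. The configurations $(v,x,y)$ produced in the $g=6$ and $g\ge 8$ subcases become contradictory at once, without appealing to a further leaf of $G_0$.
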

\begin{proof}
We prove the two statements separately.

(1) By Claim \ref{claim5},
$\delta(\underline{G}) =2$,  and so $\delta(G) \le 2\mu$. By Lemma~\ref{critical}, we have that
$$
2\mu \ge \delta(G)\ge (n-1)(\chi'(G) -\Delta -1)+2= (2g-2)\left(\roundup{\frac{2\mu}g}-1\right) +2.
$$
Hence,
\[
(g-2)\left(\roundup{\frac{2\mu}g} -1\right) + g\left(\roundup{\frac{2\mu}g} -\frac{2\mu}g\right) \le g-2.
\]
Since $\roundup{2\mu/g}\ge 2$ and $\roundup{2\mu/g} \ge 2\mu/g$, both equalities hold; i.e., $2\mu/g = \roundup{2\mu /g}=2$.  Hence,  $\chi'(G) =\Delta +\roundup{2\mu/g} =\Delta +2$ and   $\mu=g$.
By Lemma~\ref{critical}, $\delta(G) \ge n\mu/g +1=(2g-1)g/g+1=2g=2\mu$, which in turn gives  $\delta(G) =2\mu$ since $\delta(G)\leq 2\mu$.

(2) Let $v, x, y$ be as described. Since $\delta(G) = 2\mu$ (by (1)) and $d_{\underline{G}}(x) =d_{\underline{G}}(y) =2$, it follows  that $d_G(x) = d_G(y) = 2\mu$, which in turn gives $\mu(v, x)=\mu(v, y) =\mu$.  Thus, $d_G(v)\ge 2\mu+1$.

Assume for a contradiction that there are  $t\ge 3$ vertices of degree $2$ in $\underline{G}$. Then the total degree is
$2|E(G)| \le (n-t)\Delta + 2t\mu = (2g-1-t) \Delta+2tg$.
So,
$$ \chi'(G)=\Delta +2= \roundup{\frac{2|E(G)|}{2g-2}}
\leq  \roundup{\frac{(2g-1-t)\Delta+2tg}{2g-2}}  = \Delta + \roundup{\frac{\Delta-t(\Delta-2g)}{2g-2}}. $$
Consequently, $\frac{\Delta-t(\Delta-2g)}{2g-2} >1$. Since $t\ge 3$,  $\frac{\Delta-3(\Delta-2g)}{2g-2} >1$, which implies that $\Delta < 2g +1$, giving a contradiction to $d_G(v) \ge 2\mu +1 = 2g+1$.
\end{proof}

Suppose now that $g=6$, and so $n_0 =5$.
Recall that every component of $G_0$ is a tree with at least two leaves, each of which is adjacent to $C_1$ (by Claim \ref{claim5}). Since $G$ contains no $4$-fan (by Claim \ref{cla-tFan}) it follows that $\delta(G_0)\leq 3$.
Hence $G_0$ is one of the following graphs: (1) the disjoint union of two paths of lengths $2$ and $3$, respectively;
(2) a path on five vertices; and (3) a copy of $K_{1,3}$ with one edge subdivided.

Let $uvw$ be a subpath of $G_0$ such that $u$ is a leaf and suppose that in Case (1) and Case (3),  $w$ is also a leaf.  Let $x\in N_{C_1}(u)$ and let $y$ and $z$ be the two neighbors of $x$ on $C_1$.  Let $V_0\setminus \{u, v, w\} = \{a, b\}$. Then $a \sim b$ by the choice of the path $uvw$. Since $g(G) =6$ and $u\sim x$, $N(\{y, z\})\cap \{v, w\} = \emptyset$ and $N(y)\cap N(z) =\{x\}$. If $N(\{y, z\}) \supseteq \{a, b\}$, then $G$ contains a cycle with five vertices, a contradiction. We may assume that $N(y)\cap \{a, b\} = \emptyset$, which in turn implies that $N(y)\cap V(G_0) = \emptyset$.  Hence,  $d_{\underline{G}}(y) =2$.
By Claim \ref{claim5} and Lemma \ref{lem-shortcycle}(1),
all leaves in $G_0$ have degree exactly two in $\underline{G}$; in particular, $d_{\underline{G}}(u) =2$. We now find three vertices $x, u, y$ with  $d_{\underline{G}}(x) \ge 3$, $u, y\in N(x)$ and $d_{\underline{G}}(u) =d_{\underline{G}}(y)=2$.  By Claim~\ref{cla: chi'}(2), $u$ and $y$ are the only vertices of degree 2 in $\underline{G}$. However, $G_0$ must contain at least one more leaf besides $u$, which will also have degree exactly 2 in $G_0$, a contradiction.

\vskip .2in
We may now assume that $g\geq 8$, and prove the following claim.

\begin{claim}\label{cla-2g-1}
$|N_{G_0}(C_1)|\le 4$.
\end{claim}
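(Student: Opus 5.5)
The plan is to exploit the tight vertex budget $n_0=g-1$ in this case ($\ell=1$, $|C_1|=g$, $g\ge 8$ even) together with the fan bounds of Claim~\ref{cla-tFan}. Call a vertex of $G_0$ an \emph{attachment} if it has a neighbour on $C_1$. By Lemma~\ref{lem-shortcycle}(1), each attachment has exactly one such neighbour. Every leaf of $G_0$ is an attachment by Claim~\ref{claim5}. Suppose, for a contradiction, that there are at least five attachments.

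\emph{Step 1: $G_0$ is disconnected.} Let $T_1,\dots,T_m$ be the components of $G_0$. Each is a tree with at least two leaves, so each yields a $2$-fan to $C_1$, and Claim~\ref{cla-tFan} gives $|T_i|\ge g/2-1\ge 3$.
\begin{itemize}
\item Since $3(g/2-1)>g-1$ for $g\ge 8$, we get $m\le 2$.
\item Suppose a component $T_i$ contains three attachments $u_1,u_2,u_3$. The minimal subtree of $T_i$ spanning them has a vertex $v_0$ joined to $u_1,u_2,u_3$ by internally disjoint paths (the branch vertex, or the middle attachment if the subtree is a path). Appending the edges from each $u_j$ to $C_1$ gives a $3$-fan from $v_0$ to $C_1$. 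Claim~\ref{cla-tFan} then forces $G_0$ to be a tree, contradicting $m=2$.
\end{itemize}
So each of the at most two components has at most two attachments, and there are at most $4$ in total.

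\emph{Step 2: $G_0$ is a tree.} Let $A$ be the set of attachments, with $|A|=k\ge 5$, and let $S$ be the minimal subtree of $G_0$ spanning $A$. For $u\in A$, let $x_u$ be its neighbour on $C_1$. Distinct attachments $u\ne w$ have $x_u\ne x_w$, since otherwise $u\,S[u,w]\,w\,x_u$ is a cycle of length at most $n_0+1=g$ sharing only $x_u$ with $C_1$, and comparing with the two arcs of $C_1$ at $x_u$ gives a shorter cycle. Order the attachments $u_1,\dots,u_k$ according to the cyclic order of $x_{u_1},\dots,x_{u_k}$ on $C_1$, and form the cycles
\[
D_i=u_i\,S[u_i,u_{i+1}]\,u_{i+1}\,x_{u_{i+1}}\,C_1[x_{u_{i+1}},x_{u_i}]\,x_{u_i}\,u_i .
\]
Since the arcs partition $C_1$, summing $|D_i|\ge g$ gives
\[
kg\le g+2k+\sum_i d_S(u_i,u_{i+1}).
\]
\begin{itemize}
\item If the cyclic order on $C_1$ agrees with a planar (Euler-tour) order of the leaves of $S$, then $\sum_i d_S(u_i,u_{i+1})\le 2|E(S)|\le 2(g-2)$. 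This yields $(k-1)g\le 2k+2g-4$, i.e.\ $(k-3)g\le 2k-4$, which is false for $k\ge 5$ and $g\ge 8$.
\item In general the orders need not agree. Then I would pass to a suitable sub-selection of four or five attachments whose $C_1$-order is compatible with a tour of their spanning subtree, or use pairs of crossing cycles to shorten.
\end{itemize}

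\emph{Step 3: the mixed-order case.} This is the main obstacle. My first attempt would be as follows. Take a $3$-fan from a vertex $v_0$ of $S$; by Claim~\ref{cla-tFan} it already uses at least $g-2$ of the $g-1$ vertices of $G_0$. Hence $S$ is essentially a spider with at most three legs plus one extra vertex, and there are only a handful of shapes. For each shape, the attachments lie along the legs. Two attachments that are adjacent or at distance two in $S$ are excluded by Lemma~\ref{lem-shortcycle}(2) and~(4), since $g\ge 8$. Lemma~\ref{lem-shortcycle}(3) bounds the attachments on any $5$-vertex path of $S$ by two. Combining these spacing constraints with leg lengths summing to at most $g-1$ should leave room for at most four attachments. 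Any remaining configuration would be ruled out by exhibiting a cycle of length less than $g$ via the cycle-sum inequality above applied to just three or four well-chosen attachments.
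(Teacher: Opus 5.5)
\textbf{The tree case is not proved.} Your Step 1 is correct. When $G_0$ is disconnected, a component with three attachments yields a $3$-fan, and Claim~\ref{cla-tFan} then forces $G_0$ to be a tree, a contradiction. The remaining case, where $G_0$ is a tree, is never actually established. Step 2 only covers the situation where the cyclic order of the $x_u$ on $C_1$ agrees with a tour order of $S$. Step 3 is a plan (``should leave room for at most four attachments'') rather than an argument.

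Some of the ingredients you intend to use are also wrong:
\begin{itemize}
\item Lemma~\ref{lem-shortcycle}(4) bounds the attachments on a $3$-vertex path by two, so it allows both ends to be attachments. It therefore does not exclude attachments at distance two. For $g=8$, girth alone does not exclude them either, since the resulting cycle through $C_1$ can have length exactly $8$.
\item Your justification of $x_u\ne x_w$ fails. The cycle $u\,S[u,w]\,w\,x_u\,u$ meets $C_1$ only at $x_u$, so it cannot be combined with arcs of $C_1$. Its length $d_S(u,w)+2$ can equal $g$ when $S[u,w]$ uses all $g-1$ vertices of $G_0$. Claim~\ref{cla-tFan} does not actually need distinct endpoints, so this step is unnecessary anyway.
\end{itemize}

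\textbf{The missing idea.} You already observe in Step 3 that a $3$-fan uses at least $g-2$ of the $g-1$ vertices of $G_0$. What you need is to choose the three attachments so that two further attachments lie \emph{outside} the fan.
\begin{itemize}
\item Take a subtree $F$ of $G_0$ with the minimum number of vertices among those containing at least three attachments. Some component contains three attachments: otherwise five attachments would need at least three components, while you showed there are at most two.
\item By minimality, every leaf of $F$ is an attachment, and $F$ contains exactly three attachments. Otherwise, deleting a leaf gives a smaller subtree that still contains three. So the other two attachments lie outside $F$.
\item $F$ is a subdivided claw or a path. It yields a $3$-fan from its branch vertex, or from the middle attachment if $F$ is a path, with $T^0=F$.
\item Claim~\ref{cla-tFan} gives $|F|\ge g-2$, hence $n_0\ge (g-2)+2=g$, contradicting $n_0=g-1$.
\end{itemize}
This is the paper's argument. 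It handles the connected and disconnected cases at once and makes the cyclic-order analysis of your Steps~2--3 unnecessary.
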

\begin{proof} Suppose, to the contrary, that $\{v_1,\ldots, v_5\}\subseteq N_{G_0}(C_1)$ for distinct $v_1, \ldots, v_5$. By Claim~\ref{cla-tFan}, $G_0$ has at most two components, so at least three of $v_1, \dots, v_5$ are in the same component $H$ of $G_0$.
Let $F$ be a subtree of $G_0$ such that $|V(F)\cap \{v_1,\dots,v_5\}|\geq 3$ with the minimum number of vertices. Without loss of generality, suppose $\{v_1,v_2,v_3\}\subseteq V(F)$. Then all leaves of $F$ are in $\{v_1,v_2,v_3\}$; otherwise, if there is a leaf of $F$ not in $\{v_1,v_2,v_3\}$, we may remove it to obtain a smaller subtree $F'$ such that $\{v_1,v_2,v_3\}\subseteq V(F')$. It follows that $\{v_4,v_5\}\cap V(F) = \emptyset$. Hence, $\Delta(F)\leq 3$.
If $\Delta(F) = 3$, then $v_1, v_2$ and $v_3$ are the leaves of $F$, and therefore there is a $3$-fan from a vertex of $F$ to $C_1$. If $\Delta(F) = 2$, we may assume that $v_1$ and $v_3$ are the leaves of $F$, which implies that there is a $3$-fan from $v_2$ to $C_1$. Thus, in either case, there exists a $3$-fan from a vertex in $F$ to $C_1$.
By Claim~\ref{cla-tFan}, $|F| \geq g-2$ for such a fan, which yields $|G_0 - \{v_4, v_5\}| \ge |F| \ge  g-2$, contradicting the fact that  $n_0 = g-1$.
\end{proof}

Note that all leaves of $G_0$ have degree $2$ in $\underline{G}$ and
by Lemma~\ref{lem-shortcycle}(1), each vertex in $G_0$ has at most one neighbor on $C_1$. Hence,
\( |N_{C_1}(G_0)| \le |N_{G_0}(C_1)| \le 4\). The graph induced by the neighborhood $N_{C_1}(G_0)$ of $G_0$ consists of disjoint subpaths $P_1, \dots, P_s$ along the cycle $C_1$, which are separated by vertices outside of $N_{C_1}(G_0)$ along $C_1$.
Clearly, $1 \le s\le 4$.
Let $x$ be one of the ends of $P_1, \dots, P_s$. According to the definition of $P_1, \dots, P_s$, the predecessor or the successor of $x$, say $y$, is not in $N_{C_1}(G_0)$.
Hence, $d_{G_0}(y)=2$, as  $y$ only has two neighbors in $C_1$. If $x$ is adjacent to a leaf of $G_0$, then by Claim~\ref{cla-2g-1}, this leaf and $y$ are the only two vertices of degree $2$ in $\underline{G}$, a contradiction as $G_0$ has another leaf. Hence, the ends of $P_1, \dots, P_s$ are not adjacent to any leaves of $G_0$. Since $G_0$ has at least two leaves, there are at least two internal vertices in these paths $P_1, \dots,P_s$, and hence some path has at least three vertices. Consequently, if $s\ge 2$ then these paths together contain at least three ends, which in turn gives $|N_{C_1}(G_0)| \ge 5$, a contradiction. Thus, $s=1$.
If $G_0$ has three leaves, then $|N_{G_0}(C_1)|\geq 5$ as each leaf of $G_0$ has a neighbor on $C_1$, a contradiction. Hence, $G_0$ has exactly two leaves, and so $G_0$ is a path.
Since  $N_{G_0}(P_1)$ contains the two leaves of $G_0$ and two neighbors of $x_1, x_t$, we have $|N_{G_0}(P_1)| =4$. The four vertices in $N_{G_0}(P_1)$ divide the path $G_0$ into three edge disjoint subpaths. One of these subpaths, denoted by $Q$, has at most $|E(G_0)|/3 =(g-2)/3$ edges, which in turn gives a cycle $D$ in $\underline{G}$ with $|E(D)| \le (g-2)/3 +2 +|E(P_1)| =(g-2)/3 +5$ edges. Solving the inequality $(g-2)/3 +5 \ge g$, we find that $g\le 6.5$, a contradiction as $g\ge 8$.

\section{The proof of Theorem~\ref{thm-main} for \texorpdfstring{$g=5$}{g=5}}\label{oddipf}

Let $G$ be a graph with $g(G) \ge 5$, $\mu(G) \ge 3$ and $\chi'(G) =\Delta(G) + \roundup{\mu(G)/2}$.
Following the same argument as in the case $g\ge 6$ and $g$ is even at the beginning of Section \ref{mainpf}, we may assume that $G$ is critical, and show that $G$ itself is a ring graph. For completeness, we give a proof below. Let $H$ be a critical subgraph of $G$ with $\chi'(H) =\chi'(G)$.  Since $H\subseteq G$, we have $g(H) \ge g(G) \ge 5$, $\mu(H) \le \mu(G)$, and $\Delta(H) \le \Delta(G)$.
By Theorem~\ref{Thm-Steffen},
$$
\chi'(H) \le \Delta(H) + \roundup{\frac{\mu(H)}{\rounddown{g(H)/2}}}.
$$
On the other hand, since $\chi'(H) =\chi'(G)$, we have
$$\chi'(H)  =\Delta(G) +\roundup{\frac{\mu(G)}{2}} \ge \Delta(H) + \roundup{\frac{\mu(H)}{2}}.
$$
The combination of the above two inequalities gives   $$
\chi'(H)=\Delta(H) + \roundup{\frac{\mu(H)}2} = \Delta(G) + \roundup{\frac{\mu(G)}{\rounddown{g(G)/2}}}.
$$
Since $\chi'(H) = \chi'(G) \ge \Delta(H) +2$, we have $\mu(H) \ge \rounddown{g(G)/2} +1\ge 3 =\rounddown{5/2} +1$. Therefore, $H$ satisfies the conditions of Theorem~1.2, and thus we may consider  $H$ instead of $G$.

Let $\Delta =\Delta(G)$, and $\mu=\mu(G)$. By Theorem \ref{Thm-Steffen},  $\chi'(G)\le \Delta+\roundup{\mu/\rounddown{g(G)/2}}$. Since $\chi'(G) = \Delta + \roundup{\mu/2}$, it follows that
\begin{equation}\label{eq:inequality0}
\roundup{\frac{\mu}{2}}\le \roundup{\frac{\mu}{\rounddown{ g(G)/2}}}.
\end{equation}

If $g(G) \ge 6$, then $G$ satisfies the condition of Theorem~\ref{thm-main} with $g\ge 6$ as $\chi' =\Delta +\roundup{\mu/\rounddown{g(G)/2}} \ge \Delta +2$, which is covered in the previous case. For this reason, we may assume $g(G) =5$.

Suppose, for a contradiction, that $G$ is not a ring graph.
Since $\chi'(G) > \Delta$, $\underline{G}$ contains a cycle. Since $G$ is not a ring graph, $n\ge g(G) +1=6$. By Theorem~\ref{critical}, $n$ is odd, so $n \ge 7$.

Let $V_0, V(C_1), \ldots, V(C_\ell)$ be a cycle partition of $G$. Since $C_1$ is the shortest cycle of $\underline{G}$, by Lemma \ref{lem-shortcycle}(1),  every vertex in $V(G)\setminus V(C_1)$ has at most one neighbor on $C_1$. So there is a vertex $v_1\in V(C_1)$ such that
\begin{align}\label{side1}
d_{\underline{G}}(v_1)\le\left(\frac{n-|C_1|}{|C_1|}+2\right)= \frac{n+5}{5} \ \mbox{ and }   d_{G}(v_1) \le \left(\frac{n+5}{5}\right)\mu.
\end{align}
On the other hand, by Lemma \ref{critical},
\begin{align}\label{side2}
    d_{G}(v_1) \ge (n-1)(\chi'-1 -\Delta) +2 \ge (n-1)\left(\roundup{\frac{\mu}{2}}-1\right) +2.
\end{align}
Together, \eqref{side1} and \eqref{side2} imply that
\begin{align}\label{eq: nbound}
 (n-1) \left({\roundup{\frac{\mu}{{2}}}}-1\right) + 2\leq \left(\frac{n+5}{5}\right)\mu.
\end{align}
When $\mu=3$ this implies that $n\leq 10$. When $\mu\geq 4$, by dropping the ceiling on the left-hand-side of \eqref{eq: nbound} we get $n\leq 5+20/(3\mu-10)$, and hence, $n \le 15$ if $\mu=4$ and $n \le 9$ if $\mu\ge 5$.

Suppose first that $\mu \neq 4$. Then, $n\in\{7, 9\}$ since $n$ is odd and  $7\leq n\leq 10$. Consequently $\ell =1$ since $g(G) =5$.  Since $G_0$ is acyclic, $\delta(G_0) \le 1$.  By Lemma~\ref{lem-shortcycle}(1), every vertex in $V_0$ has at most one neighbor on $C_1$. Hence, $\delta(\underline{G}) \le 2$,  and so  $\delta(G) \le 2\mu$.  Applying Lemma \ref{critical}, we find that $(n-1) (\roundup{\mu/2}-1) + 2\leq \delta(G) \le 2\mu$. When $\mu=3$ this implies that $n\leq 5$, a contradiction. Hence we may assume that $\mu\geq 5$, but this implies that $n \le 5 + 4/(\mu - 2)\leq 6$, a contradiction.

We  now assume that $\mu=4$. Since $n$ is odd and $7\leq n\leq 15$, we have that $n\in\{7, 9, 11, 13, 15\}$.
As $\mu=4$, we have by \eqref{side1} and \eqref{side2} that there is a vertex $v_1\in V(C_1)$ such that
\[
d_{\underline{G}} (v_1)\leq \frac{n+5}5\ \mbox{ and } \ d_G(v_1)\geq n+1.
\]
The second inequality implies that $d_{\underline{G}}(v_1)\geq \roundup{(n+1)/4}$. Hence,  $(n+5)/5\geq \roundup{(n+1)/4}$. If $n=9$, we have that $14/5 \ge \roundup{10/4}=3$, a contradiction. If $n=13$, then we get that $18/5 \le \roundup{14/4}=4$, a contradiction. Therefore, $n\in \{ 7, 11, 15\}$.

By Lemma \ref{critical},  we have $\delta(G)\geq n\mu/g+1=4n/5+1$, and so $\delta(\underline{G})\geq \roundup{n/5+1/4}$.
Hence,  $\delta(\underline{G})\geq 4$ when $n=15$, $\delta(\underline{G})\geq 3$ when $n=11$, and $\delta(\underline{G})\geq 2$ when $n=7$.

Suppose that $n=15$. Let $v\in V(G)$. Since $g(G) =5$, for any two distinct vertices $w_1, w_2\in N(v)$, we have $N(w_1)\cap N(w_2) =\{v\}$.  As $\delta(\underline{G})\geq 4$,
\[
n \ge |\{v\}| + |N_{\underline{G}}(v)| +\sum_{w\in N(v)} |N_{\underline{G}}(w)\setminus\{v\} | \ge 1+4 +4\cdot 3=17,
\]
a contradiction.

Now suppose that $n=7$. Then the cycle partition of $G$ consists of $V_0$ and $V(C_1)$ with $\ell=1$ and $n_0=2$. Since $\delta(\underline{G})\geq 2$ and a vertex in $V_0$ can be adjacent to at most one vertex on $C_1$ by Lemma \ref{lem-shortcycle}(1),  the two vertices in $V_0$ must be adjacent. Let $C_1=w_0w_1 \dots w_4w_0$ and $V(G_0) = \{x,y\}$. Since $g(G)=5$ we may assume without loss of generality that $x\sim w_0$ and $y\sim w_2$. Then $\underline{G}-\{w_0, w_2\}$ is a disjoint union of two edges $xy$ and $w_3w_4$ and an isolated vertex $w_1$. Hence, $E(G)$ can be partitioned into four subsets: edges incident with $w_0$, edges incident with $w_2$, edges between $x$ and $y$, and edges between $w_3$ and $w_4$. Consequently, as there are at most $\Delta$ edges incident with any single vertex and at most $\mu$ edges between any pair of vertices, $|E(G)| \le 2\Delta + 2\mu = 2\Delta +8$.
As $G$ is critical, $\Gamma(G) = \roundup{2|E(G)|/(n-1)}=\roundup{|E(G)|/3}$. So, the following holds.
\[
\Delta + 2 \leq \chi'(G) =\Gamma(G) \le   \roundup{\frac{2\Delta +8}{3}} = \Delta +\roundup{\frac{8 -\Delta}3}.
\]
We conclude that $8 -\Delta \ge 4$; i.e., $\Delta \leq 4$. Since $\mu=4$ and $\delta(\underline{G}) \ge 2$, this is a contradiction.

Finally, suppose that $n=11$. In this case, we have $\delta(\underline{G})\geq 3$ and  $\ell\leq 2$ as $g(G) =5$. Since $G_0$ is acyclic, $\delta(G_0) \le 1$. Since $C_1$ is a shortest cycle in $\underline{G}$, each vertex in $G-V(C_1)$ has at most one neighbor on $C_1$ by Lemma \ref{lem-shortcycle}(1). If $\ell =1$, we have $\delta(\underline{G}) \le 2$, giving a contradiction to $\delta(\underline{G})\geq 3$. Hence,  $\ell=2$. If $V_0\neq \emptyset$, then $|C_1|=|C_2|=5$, and the vertex in $V_0$ has at most one neighbor in each of $C_1, C_2$ (by Lemma \ref{lem-shortcycle}(1)), so it has at most two neighbors in $G$, contradicting $\delta(\underline{G})\geq 3$. Thus it must be the case that $V_0=\emptyset$ with $|C_1|=5$ and $|C_2|=6$. Assume that $V(C_1)=\{x_1,\dots,x_5\}$ and $V(C_2)=\{y_1,\dots,y_6\}$. Each vertex on $C_2$ is adjacent to at least one vertex on $C_1$, as $\delta(\underline{G})\geq 3$.
Since $g(G)=5$, we can assume without loss of generality that $y_1\sim x_1$ and $y_2\sim x_3$. Moreover, since there is no cycle of length less than five, we must have that $y_3\sim x_5$, $y_4\sim x_2$ and $y_5\sim x_4$. As $x_1\sim y_1$, it follows that $N(y_6)\cap \{x_1, x_2, x_5\} = \emptyset$.
Furthermore, since $y_5\sim x_4$, we also have that $N(y_6)\cap \{x_3, x_4\} = \emptyset$.
Hence, $y_6$ does not have any neighbors on $C_1$, which implies that $d_{\underline{G}}(y_6) \le 2$, a contradiction.

\bibliographystyle{plain}
\bibliography{References.bib}

\end{document}